\newtheorem{theorem}{Theorem}[section]		
\newtheorem{lemma}[theorem]{Lemma}
\newtheorem{proposition}[theorem]{Proposition}
\newtheorem{corollary}[theorem]{Corollary}
\newtheorem*{theoremunnum}{Theorem}
\theoremstyle{definition}
\newtheorem{definition}[theorem]{Definition}
\newtheorem{example}[theorem]{Example}
\newtheorem{remark}[theorem]{Remark}
\newcommand{\Natzeroinfty}{\Natural_0\cup\left\{\infty\right\}}
\newcommand{\Natinfty}{\Natural\cup\left\{\infty\right\}}
\newcommand{\Real}{\mathds{R}}
\newcommand{\Natural}{\mathds{N}}
\newcommand{\funcDefTwo}[4]{\left\{\begin{array}{ll}#1 & \mbox{if }#2\\#3 & \mbox{if }#4\end{array}\right.}
\newcommand{\partderxInd}[1]{\frac{\partial}{\partial x_{#1}}}
\DeclareMathOperator{\ev}{ev}
\DeclareMathOperator{\pr}{pr}
\DeclareMathOperator{\Hom}{Hom}
\begin{document}
\title{\textbf{Exponential laws for spaces of differentiable functions on topological groups}}
\author{Natalie Nikitin}
\date{}
\maketitle

\begin{abstract}
Smooth functions $f:G\to E$ from a topological group $G$ to a locally convex space $E$ were considered
by Riss (1953), Boseck, Czichowski and Rudolph (1981), Belti\c{t}\u{a} and Nicolae (2015), and others,
in varying degrees of generality. The space $C^\infty(G,E)$ of such functions carries a natural
topology, the compact-open $C^\infty$-topology. For topological groups $G$ and $H$, we show that
$C^\infty(G\times H,E)\cong C^\infty(G,C^\infty(H,E))$ as a locally convex space, whenever both $G$
and $H$ are metrizable or both $G$ and $H$ are locally compact. Likewise, $C^k(G, C^l(H,E))$ can be identified
with a suitable space of functions on $G\times H$.
\end{abstract}

\section{Introduction}
\hspace{4mm}
Exponential laws of the form $C^\infty(M\times N,E)\cong C^\infty(M,C^\infty(N,E))$ for spaces of vector-valued
smooth functions on manifolds are essential tools in infinite-dimensional calculus and infinite-dimensional
Lie theory (cf. works by Kriegl and Michor \cite{KrieglA-MichorPW1997ConvSetGlobAna}, 
Kriegl, Michor and Rainer \cite{KrieglA-MichorPW-RainerA2015ExpLawTestFuncDiffGr},
Alzaareer and Schmeding \cite{AlzaareerH-SchmedingA2015DiffMappOnProd},
Gl\"{o}ckner \cite{GlH2015RegInfDimLieGrSemireg}, 
Gl\"{o}ckner and Neeb \cite{GlH-NeebK-Hprep}, 
Neeb and Wagemann \cite{NeebK-H-WagemannF2008},
and others). Stimulated by recent research by Belti\c{t}\u{a} and Nicolae \cite{BeltitaD-NicolaeM2014UnivEnvAlg},
we provide exponential laws for function spaces on topological groups.
\vspace{1mm}

\hspace{4mm}
Let $G$ be a topological group, $U\subseteq G$ be an open subset, $f:U\to E$ be a function to a locally convex
space and $\mathfrak{L}(G):=\Hom_{cts}(\Real,G)$ be the set of continuous one-parameter subgroups
$\gamma:\Real\to G$, endowed with the compact-open topology. For $x\in U$ and $\gamma\in\mathfrak{L}(G)$
let us write

\begin{align*}
	D_\gamma f(x):=\lim_{t\to0}\frac{1}{t}(f(x\cdot\gamma(t))-f(x))
\end{align*}

if the limit exists. Following Riss \cite{RissJ1953CalcDiff} and Boseck et al. 
\cite{BoseckH-CzichowskiG-RudolphK-P1981AnalTopGrGenLieTh}, we say that $f$ is $C^k$
(where $k\in\Natzeroinfty$) if $f$ is continuous, the iterated derivatives

\begin{align*}
	d^{(i)}f(x,\gamma_1,\ldots,\gamma_i):=(D_{\gamma_i}\cdots D_{\gamma_1}f)(x)
\end{align*}

exist for all $x\in U$, $i\in\Natural$ with $i\leq k$
and $\gamma_1,\ldots,\gamma_i\in\mathfrak{L}(G)$, and the maps $d^{(i)}f:U\times\mathfrak{L}(G)^i\to E$
so obtained are continuous. We endow the space $C^k(U,E)$ of all $C^k$-maps $f:U\to E$ with the
compact-open $C^k$-topology (recalled in Definition \ref{def:Ck-map-top-gr}). If $G$ and $H$ are
topological groups and $f:G\times H\to E$ is $C^\infty$, then $f^\vee(x):=f(x,\bullet)\in C^\infty(H,E)$
for all $x\in G$. With a view towards universal enveloping algebras,
Belti\c{t}\u{a} and Nicolae \cite{BeltitaD-NicolaeM2014UnivEnvAlg} verified that $f^\vee\in C^\infty(G,C^\infty(H,E))$
and showed that the linear map

\begin{align*}
	\Phi:C^\infty(G\times H,E)\to C^\infty(G,C^\infty(H,E)),\quad f\mapsto f^\vee
\end{align*}

is a topological embedding.
\vspace{1mm}

\hspace{4mm}
Recall that a Hausdorff space $X$ is called a $k_\Real$-space if functions $f:X\to\Real$ are
continuous if and only if $f\big|_K$ is continuous for each compact subset $K\subseteq X$. We obtain
the following criterion for surjectivity of $\Phi$:

\begin{theoremunnum}[\textbf{A}]
Let $U\subseteq G$, $V\subseteq H$ be open subsets of topological groups $G$ and $H$, and
$E$ be a locally convex space. If $U\times V\times\mathfrak{L}(G)^i\times\mathfrak{L}(H)^j$ is a
$k_\Real$-space for all $i,j\in\Natural_0$, then

\begin{align*}
	\Phi:C^\infty(U\times V,E)\to C^\infty(U,C^\infty(V,E)),\quad f\mapsto f^\vee
\end{align*}

is an isomorphism of topological vector spaces.
\end{theoremunnum}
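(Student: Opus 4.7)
Since $\Phi$ is already a topological embedding by Belti\c{t}\u{a} and Nicolae, only surjectivity needs to be established. Given $g\in C^\infty(U,C^\infty(V,E))$, the natural candidate preimage is the function $f\colon U\times V\to E$ defined by $f(x,y):=g(x)(y)$; the plan is to verify $f\in C^\infty(U\times V,E)$, in which case $\Phi(f)=f^\vee=g$. For continuity of $f$ itself, factor it as $\ev\circ(g\times\id_V)$, where $\ev\colon C^\infty(V,E)\times V\to E$ is evaluation. On a compact $K\subseteq U\times V$, the image $g(\pi_U(K))\subseteq C^\infty(V,E)$ is compact, and evaluation is jointly continuous on compact-by-compact subsets (using that the compact-open $C^\infty$-topology refines the compact-open topology). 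The $k_\Real$-hypothesis on $U\times V$ (the $i=j=0$ case) then upgrades compact-wise continuity to continuity on all of $U\times V$, applied seminorm-by-seminorm for the locally convex target $E$.

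For directional derivatives, use $\mathfrak{L}(G\times H)\cong\mathfrak{L}(G)\times\mathfrak{L}(H)$ to write a one-parameter subgroup as $(\gamma,\eta)$, and decompose
\begin{align*}
\frac{g(x\gamma(t))(y\eta(t))-g(x)(y)}{t}=\frac{g(x\gamma(t))(y\eta(t))-g(x\gamma(t))(y)}{t}+\frac{g(x\gamma(t))(y)-g(x)(y)}{t}.
\end{align*}
The second summand tends to $d^{(1)}g(x,\gamma)(y)$ by definition of the derivative of $g$; the first, by a uniform-convergence argument along the compact path $s\mapsto g(x\gamma(s))$ in $C^\infty(V,E)$, tends to $D_\eta g(x)(y)$. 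Thus $D_{(\gamma,\eta)}f(x,y)=d^{(1)}g(x,\gamma)(y)+D_\eta g(x)(y)$, and an induction on $n$ expresses $d^{(n)}f$ at $(x,y,(\gamma_1,\eta_1),\ldots,(\gamma_n,\eta_n))$ as a finite sum of iterated mixed derivatives of $g$ (some steps along $G$-directions applied at $x$, the remaining steps along $H$-directions evaluated at $y$), indexed by the ways of assigning each of the $n$ derivation steps to either factor.

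The main obstacle is joint continuity of the $d^{(n)}f$ on the full product $U\times V\times\mathfrak{L}(G\times H)^n$. Each summand in the decomposition is continuous on compact subsets of $U\times V\times\mathfrak{L}(G)^i\times\mathfrak{L}(H)^j$, because on compacta one may interchange limits with evaluation and invoke the continuity of the mixed derivatives of $g$ directly; but compact-wise continuity does not globalize automatically for general topological spaces. This is precisely where the $k_\Real$-hypothesis on every $U\times V\times\mathfrak{L}(G)^i\times\mathfrak{L}(H)^j$ is used: applied seminorm-by-seminorm to the $E$-valued maps, it bridges the gap between pointwise-on-compacta continuity and genuine continuity. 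With all $d^{(n)}f$ continuous, $f\in C^\infty(U\times V,E)$, and surjectivity of $\Phi$ is established.
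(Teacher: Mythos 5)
Your overall strategy---take the candidate preimage $f=g^\wedge$, compute its derivatives by splitting the difference quotient along the two factors, and use the classical exponential law together with the $k_\Real$-hypothesis (applied to the completely regular space $E$) to upgrade continuity on compacta to continuity---is the same as the paper's, and the formula you arrive at, expressing $d^{(n)}f$ as a sum over assignments of the $n$ derivation steps to the two factors, is exactly the paper's formula for $d^{(i)}f$ in terms of the $d^{(j,i-j)}f$. However, there are two genuine gaps. First, the convergence of your first summand, $\frac{1}{t}\bigl(g(x\gamma(t))(y\eta(t))-g(x\gamma(t))(y)\bigr)\to D_\eta(g(x))(y)$, is asserted via ``a uniform-convergence argument along the compact path'' but not proved; the paper obtains the analogous limit from the continuous difference-quotient maps $f^{[1,0]}$ of Lemma \ref{lem:f-C10-iff-f[1-0]-ex-and-cont-top-gr-top-space}, whose continuity at $t=0$ rests on the integral representation $f^{[1,0]}(x,\gamma,t,y)=\int_0^1 d^{(1,0)}f(x\cdot\gamma(tu),y,\gamma)\,du$ and on already knowing that $f$ is $C^{1,1}$ in the separate-variables sense. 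This points to a structural issue: the paper first proves that $g^\wedge$ is $C^{\infty,\infty}$ (Theorem (B), where each derivative is taken in one variable at a time, so no mixed difference quotient occurs), and only afterwards converts $C^{\infty,\infty}$ into $C^\infty$ (Proposition \ref{prop:f-Ckk-then-f-Ck-top-gr}); by merging the two steps you need the mixed estimate before the separate-variable regularity of $g^\wedge$ is available.

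Second, and more seriously, your induction does not close as stated because of the ordering of mixed derivatives. Differentiating a summand such as $D_{(\gamma,0)}D_{(0,\eta)}f$ in a new direction $(\gamma',\eta')$ produces the word $D_{(0,\eta')}D_{(\gamma,0)}D_{(0,\eta)}f$, which is not of the form ``$H$-derivatives applied to $G$-derivatives of $g$'' delivered by $d^{(j)}\circ d^{(i)}g$; to identify the arbitrary interleavings produced by the induction with derivatives of $g$ (and hence to obtain their existence and continuity) you must commute $G$-directions past $H$-directions. This is not automatic: Example \ref{exam:schwarz-does-not-work-top-gr} shows that $D_\gamma D_\eta f\neq D_\eta D_\gamma f$ can occur on the Heisenberg group even when all derivatives involved are continuous. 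The needed commutation is the restricted Schwarz theorem for $C^{k,l}$-maps (Proposition \ref{prop:f-Ckl-then-Deta-Dgamma-eq-Dgamma-Deta-top-gr}), one of the paper's main technical lemmas, whose hypotheses again presuppose that $g^\wedge$ is already known to be $C^{k,l}$. Supplying these two ingredients---Theorem (B) for the $C^{\infty,\infty}$-regularity of $g^\wedge$, and the restricted Schwarz theorem for the reordering---is precisely what turns your sketch into the paper's proof.
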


The condition is satisfied, for example, if both $G$ and $H$ are locally compact or both $G$ and $H$ are
metrizable (see Corollary \ref{cor:exp-law-holds-if-metriz-or-lcp-top-gr}).
\vspace{1mm}

\hspace{4mm}
Generalizing the case of open subsets $U$ and $V$ in locally convex spaces treated by
Alzaareer and Schmeding \cite{AlzaareerH-SchmedingA2015DiffMappOnProd} 
and Gl\"{o}ckner and Neeb \cite{GlH-NeebK-Hprep}, we introduce $C^{k,l}$-functions
$f:U\times V\to E$ on open subsets $U\subseteq G$ and $V\subseteq H$ of topological groups
with separate degrees $k,l\in\Natzeroinfty$ of differentiability in the two variables,
and a natural topology on the space $C^{k,l}(U\times V,E)$ of
such maps (see Definition \ref{def:Ckl-map-top-gr} for details). Theorem (A) is a 
consequence of the following result:

\begin{theoremunnum}[\textbf{B}]
Let $U\subseteq G$, $V\subseteq H$ be open subsets of topological groups $G$ and $H$, let
$E$ be a locally convex space and $k,l\in\Natzeroinfty$. If $U\times V\times\mathfrak{L}(G)^i\times\mathfrak{L}(H)^j$ is a
$k_\Real$-space for all $i,j\in\Natural_0$ with $i\leq k$, $j\leq l$, then

\begin{align*}
	\Phi:C^{k,l}(U\times V,E)\to C^k(U,C^l(V,E)),\quad f\mapsto f^\vee
\end{align*}

is an isomorphism of topological vector spaces.
\end{theoremunnum}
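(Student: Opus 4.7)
The plan is to decompose the theorem into three sub-tasks: (i) show that $\Phi$ is a well-defined continuous linear map; (ii) show it is a topological embedding, by exhibiting an inverse on its image that is also continuous; and (iii) show that $\Phi$ is surjective. Only (iii) will rely on the $k_\Real$-space hypothesis. Tasks (i) and (ii) should be obtainable by adapting the $C^\infty$-argument of Belti\c{t}\u{a}--Nicolae \cite{BeltitaD-NicolaeM2014UnivEnvAlg} to the $C^{k,l}$-setting. Concretely, for $f \in C^{k,l}(U \times V, E)$, I would prove by induction on $i \leq k$ that the iterated directional derivatives $d^{(i)} f^\vee$ exist in $C^l(V, E)$ and satisfy
$$d^{(i)} f^\vee(x, \gamma_1, \ldots, \gamma_i)(y) = d^{(i, 0)} f(x, y, \gamma_1, \ldots, \gamma_i);$$
the continuity of $d^{(i, j)} f$ on the full product then forces $(x, \gamma) \mapsto d^{(i)} f^\vee(x, \gamma)$ to be continuous into $C^l(V, E)$. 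The generating semi-norms of the compact-open $C^k$- and $C^{k,l}$-topologies correspond under this identification, so $\Phi$ is a topological embedding.

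For surjectivity, given $g \in C^k(U, C^l(V, E))$ I would set $f(x, y) := g(x)(y)$ and aim to show $f \in C^{k,l}(U \times V, E)$; then $\Phi(f) = g$ is tautological. The candidate formula for the mixed iterated derivative is
$$d^{(i, j)} f(x, y, \gamma_1, \ldots, \gamma_i, \eta_1, \ldots, \eta_j) = d^{(j)} \bigl[ d^{(i)} g(x, \gamma_1, \ldots, \gamma_i) \bigr](y, \eta_1, \ldots, \eta_j).$$
Existence of the left-hand side and agreement with the right-hand side follow by induction on $i + j$: differentiation in the $G$-variable commutes with the continuous linear point-evaluation $\ev_y : C^l(V, E) \to E$, which accounts for the $G$-side derivatives, while the $H$-side derivatives are literally those built into the element $d^{(i)} g(x, \gamma) \in C^l(V, E)$. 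The remaining issue is joint continuity.

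The main obstacle is therefore to show that the candidate $d^{(i, j)} f$ is continuous on $U \times V \times \mathfrak{L}(G)^i \times \mathfrak{L}(H)^j$ for all $i \leq k$, $j \leq l$. My plan here is in two steps. First, the map $(x, \gamma) \mapsto d^{(i)} g(x, \gamma)$ is continuous into $C^l(V, E)$ by the hypothesis $g \in C^k(U, C^l(V, E))$, and the ``pass-to-derivative'' map $h \mapsto d^{(j)} h$ from $C^l(V, E)$ to $C(V \times \mathfrak{L}(H)^j, E)$ is continuous essentially by definition of the compact-open $C^l$-topology; composing gives a continuous map $(x, \gamma) \mapsto d^{(j)}[d^{(i)} g(x, \gamma)]$ into $C(V \times \mathfrak{L}(H)^j, E)$. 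Second, on any compact subset of the target product space, evaluation $C(V \times \mathfrak{L}(H)^j, E) \times (V \times \mathfrak{L}(H)^j) \to E$ restricted to the appropriate compacta is continuous by the standard property of the compact-open topology. The $k_\Real$-space hypothesis is invoked precisely to promote this compact-subset continuity to global continuity into the locally convex space $E$. Once this last step goes through, $f \in C^{k,l}$ follows, establishing surjectivity and completing the proof.
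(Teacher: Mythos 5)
Your proposal follows essentially the same route as the paper: the embedding part is handled without the $k_\Real$ hypothesis via the derivative identities $d^{(i)}(f^\vee)(x,\gamma_1,\ldots,\gamma_i)=(D_{(\gamma_i,0)}\cdots D_{(\gamma_1,0)}f)^\vee(x)$ and a matching of the initial topologies, while surjectivity is obtained by uncurrying $g$, computing $d^{(i,j)}(g^\wedge)=(d^{(j)}\circ d^{(i)}g)^\wedge$ through the continuous linear evaluation maps $\ev_{(y,\eta_1,\ldots,\eta_j)}\circ d^{(j)}$, and invoking the $k_\Real$/complete-regularity case of the classical exponential law for joint continuity --- exactly as in the paper. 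The one ingredient your sketch leaves implicit is the restricted Schwarz-type interchange of $G$- and $H$-derivatives (Proposition \ref{prop:f-Ckl-then-Deta-Dgamma-eq-Dgamma-Deta-top-gr}), which is needed in the embedding step both to see that each $D_{(\gamma_i,0)}\cdots D_{(\gamma_1,0)}f$ is $C^{0,l}$ in the second variable and to identify $d^{(j)}\circ d^{(i)}(f^\vee)$ with $d^{(i,j)}f$; this is not automatic (the full Schwarz theorem fails on non-abelian groups, cf.\ Example \ref{exam:schwarz-does-not-work-top-gr}), but it is available and your outline is otherwise sound.
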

\vspace{1mm}

\hspace{4mm}
\textbf{Notation:} All topological spaces are assumed Hausdorff. We call a map $f:X\to Y$ between topological
spaces $X$ and $Y$ a \textit{topological embedding} if $f$ is a homeomorphism onto its image (it is known
that an injective map $f$ is a topological embedding if and only if the topology on $X$ is initial with respect to $f$,
that is, $X$ carries the coarsest topology making $f$ continuous).
\vspace{1mm}

\hspace{4mm}
\textbf{Acknowledgement:} I wish to express my deepest thanks to Prof. Dr. Helge Gl\"ockner
for precious advice and support. I also wish to thank Rafael Dahmen and Gabor Lukacs for very 
helpful comments on $k_\Real$-spaces.

\section{Differentiability of mappings on topological groups}

\begin{definition}\label{def:one-par-subgr-top-gr}
Let $G$ be a topological group, a \textit{one-parameter subgroup} is a group homomorphism
$\gamma:\Real\to G$.
We denote by $\mathfrak{L}(G):=\Hom_{cts}(\Real, G)$ the set of all
continuous one-parameter subgroups, endowed with the compact-open topology.
\end{definition}

\begin{remark}\label{rem:comp-and-prod-one-par-subgr-top-gr}
If $\gamma\in\mathfrak{L}(G)$ and $\phi:G\to H$ is a continuous group homomorphism, then 
$\phi\circ\gamma\in\mathfrak{L}(H)$ and the map $\mathfrak{L}(\phi):\mathfrak{L}(G)\to\mathfrak{L}(H),
\gamma\mapsto\phi\circ\gamma$ is continuous 
(cf. \cite[Appendix A.5]{GlH-NeebK-Hprep},
see also \cite[Appendix B]{GlH2013ExpLawsUltra}).
\vspace{1mm}

Further, for $\psi=(\gamma,\eta)\in C(\Real,G\times H)$ it is easy to see that $\psi\in\mathfrak{L}(G\times H)$
if and only if $\gamma\in\mathfrak{L}(G)$ and $\eta\in\mathfrak{L}(H)$. Moreover, the natural map
$(\mathfrak{L}(\pr_1),\mathfrak{L}(\pr_2)):\mathfrak{L}(G\times H)\to\mathfrak{L}(G)\times\mathfrak{L}(H)$ 
(where $\pr_1:G\times H\to G$, $\pr_2:G\times H\to H$ are
the coordinate projections) is a homeomorphism (cf. \cite[Appendix A.5]{GlH-NeebK-Hprep}, \cite[Appendix B]{GlH2013ExpLawsUltra}).
\end{remark}

\hspace{4mm}
Now, we define the notion of differentiability along one-parameter subgroups of vector-valued functions
on topological groups:

\begin{definition}\label{def:Ck-map-top-gr}
Let $U\subseteq G$ be an open subset of a topological group $G$ and $E$ be a locally convex space. For a map
$f:U\to E$, $x\in U$ and $\gamma\in\mathfrak{L}(G)$ we define

\begin{align*}
	d^{(1)}f(x,\gamma)
	:=df(x,\gamma)
	:=D_\gamma f(x)
	:=\lim_{t\to0}\frac{1}{t}(f(x\cdot\gamma(t))-f(x))
\end{align*}

if the limit exists. 
\vspace{1mm}

We call $f$ a \textit{$C^k$-map} for $k\in\Natural$ if $f$ is continuous and for each $x\in U$, $i\in\Natural$ with $i\leq k$
and $\gamma_1,\ldots,\gamma_i\in\mathfrak{L}(G)$ the iterative derivatives

\begin{align*}
	d^{(i)}f(x,\gamma_1,\ldots,\gamma_i):=(D_{\gamma_i}\cdots D_{\gamma_1}f)(x)
\end{align*}

exist and define continuous maps

\begin{align*}
	d^{(i)}f:U\times\mathfrak{L}(G)^i\to E,\quad(x,\gamma_1,\ldots,\gamma_i)\mapsto (D_{\gamma_i}\cdots D_{\gamma_1}f)(x).
\end{align*}

If $f$ is $C^k$ for each $k\in\Natural$, then we call $f$ a \textit{$C^\infty$-map} or \textit{smooth}. Further, we call continuous
maps $C^0$ and denote $d^{(0)}f:=f$.
\vspace{1mm}

The set of all $C^k$-maps $f:U\to E$ will be denoted by $C^k(U,E)$ and we endow it with the initial topology with respect
to the family $(d^{(i)})_{i\in\Natural_0, i\leq k}$ of maps

\begin{align*}
	d^{(i)}:C^k(U,E)\to C(U\times\mathfrak{L}(G)^i,E)_{c.o},\quad f\mapsto d^{(i)}f
\end{align*}

(where the right-hand side is equipped with the compact-open topology) turning $C^k(U,E)$ into a Hausdorff locally convex space. (This topology is known as the \textit{compact-open $C^k$-topology}.)
\end{definition}

\begin{definition}\label{def:Ckl-map-top-gr}
Let $U\subseteq G$ and $V\subseteq H$ be open subsets of topological groups $G$ and $H$, let
$E$ be a locally convex space. 
For a map $f:U\times V\to E$, $x\in U$, $y\in V$, $\gamma\in\mathfrak{L}(G)$ and $\eta\in\mathfrak{L}(H)$
we define

\begin{align*}
	d^{(1,0)}f(x,y,\gamma)&:=D_{(\gamma,0)}f(x,y)
	:=\lim_{t\to0}\frac{1}{t}(f(x\cdot\gamma(t),y)-f(x,y))
\end{align*}

and

\begin{align*}
	d^{(0,1)}f(x,y,\eta)&:=D_{(0,\eta)}f(x,y)
	:=\lim_{t\to0}\frac{1}{t}(f(x,y\cdot\eta(t))-f(x,y))
\end{align*}

whenever the limits exist.
\vspace{1mm}

We call a continuous map $f:U\times V\to E$ a 
\textit{$C^{k,l}$-map} for $k,l\in\Natzeroinfty$ if the derivatives

\begin{align*}
	d^{(i,j)}f(x,y,\gamma_1,\ldots,\gamma_i,\eta_1,\ldots,\eta_j)
	:=(D_{(\gamma_i,0)}\cdots D_{(\gamma_1,0)}D_{(0,\eta_j)}\cdots D_{(0,\eta_1)}f)(x,y)
\end{align*}

exist for all $x\in U$, $y\in V$, $i,j\in\Natural_0$ with $i\leq k$, $j\leq l$ and 
$\gamma_1,\ldots,\gamma_i\in\mathfrak{L}(G)$, $\eta_1,\ldots,\eta_j\in\mathfrak{L}(H)$,
and define continuous functions

\begin{align*}
	d^{(i,j)}f:U\times V\times\mathfrak{L}(G)^i\times\mathfrak{L}(H)^j&\to E\\
	(x,y,\gamma_1,\ldots,\gamma_i,\eta_1,\ldots,\eta_j)&\mapsto 
	(D_{(\gamma_i,0)}\cdots D_{(\gamma_1,0)}D_{(0,\eta_j)}\cdots D_{(0,\eta_1)}f)(x,y).
\end{align*}

We endow the space $C^{k,l}(U\times V,E)$ of all $C^{k,l}$-functions $f:U\times V\to E$ with the Hausdorff
locally convex initial topology with respect to the family $(d^{(i,j)})_{i,j\in\Natural_0, i\leq k, j\leq l}$ of maps

\begin{align*}
	d^{(i,j)}:C^{k,l}(U\times V,E)\to C(U\times V\times\mathfrak{L}(G)^i\times\mathfrak{L}(H)^j,E)_{c.o},\quad
	f\mapsto d^{(i,j)}f,
\end{align*}

where the right-hand side is equipped with the compact-open
topology. (The so obtained topology on $C^{k,l}(U\times V,E)$ is called the \textit{compact-open $C^{k,l}$-topology.})
\end{definition}

\begin{remark}\label{rem:f-Ck0-C0l-top-gr-top-space}
If $k=0$ or $l=0$, then the definition of $C^{k,l}$-maps $f:U\times V\to E$ also makes sense if
$U$ or $V$, respectively, is any Hausdorff topological space.
All further results for $C^{k,l}$-maps on topological groups carry over to this situation.
\end{remark}

\begin{remark}\label{rem:f-Ck-or-Ckl-then-relations-to-derivatives-top-gr}
Simple computations show that for $k\geq 1$ a map $f:U\to E$ is $C^k$ if and only if $f$ is $C^1$ and
$df:U\times\mathfrak{L}(G)\to E$ is $C^{k-1,0}$, in this case we have
$d^{(i,0)}(df)=d^{(i+1)}f$
for all $i\in\Natural$ with $i\leq k-1$.
\vspace{1mm}

Similarly, we can show that a map $f:U\times V\to E$ is $C^{k,0}$ if and only if $f$ is $C^{1,0}$
and $d^{(1,0)}f:U\times (V\times\mathfrak{L}(G))\to E$ is $C^{k-1,0}$ with differentials
$d^{(i,0)}(d^{(1,0)}f)=d^{(i+1,0)}f$
for all $i$ as above.
\vspace{1mm}

Further, if a map $f:U\times V\to E$ is $C^{k,l}$, then for each $i,j\in\Natural_0$ with $i\leq k$, $j\leq l$
and fixed $\gamma_1,\ldots,\gamma_i\in\mathfrak{L}(G)$, $\eta_1,\ldots,\eta_j\in\mathfrak{L}(H)$ the map

\begin{align*}
	D_{(\gamma_i,0)}\cdots D_{(\gamma_1,0)}D_{(0,\eta_j)}\cdots D_{(0,\eta_1)}f:U\times V\to E
\end{align*}

is $C^{k,l-j}$ if $i=0$, and $C^{k-i,0}$ otherwise.
\end{remark}

\hspace{4mm}
We warn the reader that the full statement of Schwarz' Theorem does not carry over to non-abelian
topological groups; for a function $f:G\to\Real$ and $\gamma,\eta\in\mathfrak{L}(G)$ such that
$D_\gamma f, D_\eta f, D_\gamma D_\eta f:U\to \Real$ are continuous functions it may
happen that $D_\gamma D_\eta f\neq D_\eta D_\gamma f$ (see Example \ref{exam:schwarz-does-not-work-top-gr}).
Nevertheless, we can prove the following restricted version of Schwarz' Theorem for $C^{k,l}$-maps:

\begin{proposition}\label{prop:f-Ckl-then-Deta-Dgamma-eq-Dgamma-Deta-top-gr}
Let $U\subseteq G$ and $V\subseteq H$ be open subsets of topological groups $G$ and $H$, let $E$ be a 
locally convex space and $f:U\times V\to E$ be a $C^{k,l}$-map for some $k,l\in\Natinfty$. Then the derivatives

\begin{align*}
	(D_{(0,\eta_j)}\cdots D_{(0,\eta_1)}D_{(\gamma_i,0)}\cdots D_{(\gamma_1,0)}f)(x,y)
\end{align*}

exist for all $(x,y)\in U\times V$, $i,j\in\Natural$ with $i\leq k$, $j\leq l$ and $\gamma_1,\ldots,\gamma_i\in\mathfrak{L}(G)$,
$\eta_1,\ldots,\eta_j\in\mathfrak{L}(H)$ and we have

\begin{align*}
	(D_{(0,\eta_j)}\cdots D_{(0,\eta_1)}&D_{(\gamma_i,0)}\cdots D_{(\gamma_1,0)}f)(x,y)\\
	&=(D_{(\gamma_i,0)}\cdots D_{(\gamma_1,0)}D_{(0,\eta_j)}\cdots D_{(0,\eta_1)}f)(x,y).
\end{align*}
\end{proposition}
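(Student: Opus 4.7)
The plan is to reduce the commutativity to the $C^{1,1}$-base case (essentially the classical Schwarz theorem on $\Real^2$) and to propagate it by a double induction in $(i,j)$.

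\textbf{Key Lemma (base case).} I would first prove: if $g:U\times V\to E$ is continuous and the derivatives $D_{(\gamma,0)}g$, $D_{(0,\eta)}g$ and $D_{(\gamma,0)}D_{(0,\eta)}g$ exist and are continuous on $U\times V$ for some $\gamma\in\mathfrak{L}(G)$, $\eta\in\mathfrak{L}(H)$, then $D_{(0,\eta)}D_{(\gamma,0)}g$ exists and equals $D_{(\gamma,0)}D_{(0,\eta)}g$. To show this, fix $(x_0,y_0)\in U\times V$, pick a small square $\Omega\subseteq\Real^2$ with $(x_0\gamma(s),y_0\eta(t))\in U\times V$ for $(s,t)\in\Omega$, and consider
\[
\phi:\Omega\to E,\quad(s,t)\mapsto g(x_0\gamma(s),y_0\eta(t)).
\]
Using the group law $x_0\gamma(s+s')=x_0\gamma(s)\gamma(s')$ and its analog for $\eta$, the hypotheses translate into: $\phi$ is continuous with continuous $\partial_s\phi$, $\partial_t\phi$ and $\partial_s\partial_t\phi$. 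The classical Schwarz theorem, which extends to locally convex $E$ by composition with continuous linear functionals $\lambda:E\to\Real$ (they commute with $\partial_s,\partial_t$ and separate points since $E$ is Hausdorff locally convex), then gives $\partial_t\partial_s\phi=\partial_s\partial_t\phi$; evaluation at $(0,0)$ is the claim.

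\textbf{Double induction.} For the proposition itself, I would induct outwardly on $j$, proving at stage $j$ the statement $(Q_j)$: for every $i\leq k$ and all admissible $\gamma_1,\ldots,\gamma_i,\eta_1,\ldots,\eta_j$, the derivative $D_{(0,\eta_j)}\cdots D_{(0,\eta_1)}D_{(\gamma_i,0)}\cdots D_{(\gamma_1,0)}f$ exists on $U\times V$, equals $d^{(i,j)}f$, and depends continuously on all arguments. The cases $j=0$ and $i=0$ are part of $f$ being $C^{k,l}$. For $i,j\geq 1$, I induct inwardly on $i$. Setting
\[
\tilde g:=D_{(\gamma_{i-1},0)}\cdots D_{(\gamma_1,0)}D_{(0,\eta_{j-1})}\cdots D_{(0,\eta_1)}f=d^{(i-1,j-1)}f(\cdot,\cdot,\gamma_1,\ldots,\gamma_{i-1},\eta_1,\ldots,\eta_{j-1}),
\]
the Key Lemma applies to $\tilde g$ with directions $\gamma_i,\eta_j$: both $D_{(\gamma_i,0)}\tilde g=d^{(i,j-1)}f$ and $D_{(\gamma_i,0)}D_{(0,\eta_j)}\tilde g=d^{(i,j)}f$ are covered by $C^{k,l}$-differentiability of $f$, while $D_{(0,\eta_j)}\tilde g=d^{(i-1,j)}f$ is supplied by the inner IH at $i-1$. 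The Key Lemma yields $D_{(0,\eta_j)}D_{(\gamma_i,0)}\tilde g=d^{(i,j)}f$; combining with $(Q_{j-1})$ applied to $f$ (which identifies $D_{(0,\eta_{j-1})}\cdots D_{(\gamma_1,0)}f$ with $d^{(i,j-1)}f=D_{(\gamma_i,0)}\tilde g$) completes the step.

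\textbf{Main obstacle.} The principal difficulty is not Schwarz itself but the bookkeeping. The Remark following Definition~\ref{def:Ckl-map-top-gr} shows that applying even one $G$-derivative to a $C^{k,l}$-function collapses the residual $H$-differentiability to $0$, so a one-shot reduction to a $C^{k-i,l-j}$-statement is unavailable. The double induction above is arranged precisely so that at every swap the three hypotheses of the Key Lemma can be read off either from the $C^{k,l}$-data of $f$ or from a strictly earlier stage of the induction.
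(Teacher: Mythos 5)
Your double induction is organized correctly and is, up to bookkeeping, the same argument as the paper's: the paper first treats $j=1$ by induction on $i$ (your inner induction) and then inducts on $j$ by peeling off $D_{(0,\eta_{j-1})}\cdots D_{(0,\eta_1)}f$ (your outer induction), and your accounting of which hypotheses come from the $C^{k,l}$-data of $f$ and which from strictly earlier induction stages is sound. The genuine gap is in your proof of the Key Lemma. Composing with continuous linear functionals $\lambda\in E'$ and applying the real-valued Schwarz theorem to $\lambda\circ\phi$ shows only that $\lim_{r\to0}\tfrac{1}{r}\bigl(\lambda(\partial_s\phi(0,r))-\lambda(\partial_s\phi(0,0))\bigr)$ exists and equals $\lambda\bigl(\partial_s\partial_t\phi(0,0)\bigr)$ for every $\lambda$, i.e.\ that the difference quotient of $\partial_s\phi$ converges in the weak topology $\sigma(E,E')$. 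The proposition asserts the existence of $D_{(0,\eta)}D_{(\gamma,0)}g$ as a limit in the original topology of $E$, and weak convergence does not imply convergence in a general locally convex space (an orthonormal sequence in $\ell^2$ converges weakly but not in norm). Since the existence of the reversed-order derivative is precisely the nontrivial content of the statement --- it is not part of the $C^{k,l}$ hypothesis --- the functional-separation argument delivers only uniqueness of the would-be limit, not its existence.

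The repair is the standard integral argument, which is what the result the paper quotes (\cite[Lemma 1.3.18]{GlH-NeebK-Hprep}) carries out: applying the Fundamental Theorem of Calculus twice, the second difference quotient $\tfrac{1}{hr}\bigl(\phi(h,r)-\phi(h,0)-\phi(0,r)+\phi(0,0)\bigr)$ equals the parameter-dependent weak integral $\int_0^1\!\int_0^1\partial_s\partial_t\phi(uh,vr)\,du\,dv$, which converges to $\partial_s\partial_t\phi(0,0)$ as $(h,r)\to(0,0)$ by continuity of $\partial_s\partial_t\phi$; letting $h\to0$ first and then estimating with continuous seminorms shows that $\partial_t\partial_s\phi(0,0)$ exists in $E$ and equals $\partial_s\partial_t\phi(0,0)$. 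With the Key Lemma established this way (or simply cited), the remainder of your argument goes through.
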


\begin{proof}
First we prove the assertion for $j=1$ by induction on $i$.
\vspace{1mm}

\textit{Induction start:} Let $(x,y)\in U\times V$, $\gamma\in\mathfrak{L}(G)$ and $\eta\in\mathfrak{L}(H)$. For
suitable $\varepsilon,\delta>0$ we define the continuous map

\begin{align*}
	h:]-\varepsilon,\varepsilon[\times[-\delta,\delta[\to E,\quad (s,t)\mapsto f(x\cdot\gamma(s),y\cdot\eta(t)),
\end{align*}

and obtain the partial derivatives of $h$ via

\begin{align*}
	\frac{\partial h}{\partial s}(s,t)
	&=\lim_{r\to0} \frac{1}{r}(h(s+r,t)-h(s,t))\\
	&=\lim_{r\to0} \frac{1}{r}(f(x\cdot\gamma(s)\cdot\gamma(r),y\cdot\eta(t))-f(x\cdot\gamma(s),y\cdot\eta(t)))\\
	&=D_{(\gamma,0)}f(x\cdot\gamma(s),y\cdot\eta(t)),
\end{align*}

and analogously,

\begin{align*}
	\frac{\partial h}{\partial t}(s,t)=D_{(0,\eta)}f(x\cdot\gamma(s),y\cdot\eta(t))
\end{align*}

and

\begin{align*}
	\frac{\partial^2 h}{\partial s\partial t}(s,t)=(D_{(\gamma,0)}D_{(0,\eta)}f)(x\cdot\gamma(s),y\cdot\eta(t)).
\end{align*}

The obtained maps $\frac{\partial h}{\partial s}$, $\frac{\partial h}{\partial t}$ and
$\frac{\partial^2 h}{\partial s\partial t}$ are continuous, hence we apply
\cite[Lemma 1.3.18]{GlH-NeebK-Hprep},
which states that in this case also the partial derivative $\frac{\partial^2 h}{\partial t\partial s}$ exists and
coincides with $\frac{\partial^2 h}{\partial s\partial t}$. Therefore, we have

\begin{align*}
	(D_{(\gamma,0)}D_{(0,\eta)}f)(x,y)
	&=\frac{\partial^2 h}{\partial s\partial t}(0,0)
	=\frac{\partial^2 h}{\partial t\partial s}(0,0)
	=\lim_{r\to0}\frac{1}{r}\left(\frac{\partial h}{\partial s}(0,r)-\frac{\partial h}{\partial s}(0,0)\right)\\
	&=\lim_{r\to0}\frac{1}{r}\left(D_{(\gamma,0)}f(x,y\cdot\eta(r))-D_{(\gamma,0)}f(x,y)\right)\\
	&=(D_{(0,\eta)}D_{(\gamma,0)}f)(x,y).
\end{align*}

Thus the assertion holds for $i=1$.
\vspace{1mm}

\textit{Induction step:} Now, let $2\leq i\leq k$, $(x,y)\in U\times V$, $\gamma_1,\ldots,\gamma_i\in\mathfrak{L}(G)$
and $\eta\in\mathfrak{L}(H)$. Consider the map

\begin{align*}
	g_1:U\times V\to E,\quad (x,y)\mapsto (D_{(\gamma_{i-1},0)}\cdots D_{(\gamma_1,0)}f)(x,y),
\end{align*}

which is $C^{1,0}$ (see Remark \ref{rem:f-Ck-or-Ckl-then-relations-to-derivatives-top-gr}).
Further, $g_1$ is $C^{0,1}$, because

\begin{align*}
	D_{(0,\eta)}g_1(x,y)
	&=(D_{(0,\eta)}D_{(\gamma_{i-1},0)}\cdots D_{(\gamma_1,0)}f)(x,y)\\
	&=(D_{(\gamma_{i-1},0)}\cdots D_{(\gamma_1,0)}D_{(0,\eta)}f)(x,y),
\end{align*}

by the induction hypothesis, and we see that

\begin{align*}
	(D_{(\gamma_i,0)}D_{(0,\eta)}g_1)(x,y)=(D_{(\gamma_i,0)}D_{(\gamma_{i-1},0)}\cdots D_{(\gamma_1,0)}D_{(0,\eta)}f)(x,y),
\end{align*}

whence $g_1$ is $C^{1,1}$. By the induction start, the derivative $(D_{(0,\eta)}D_{(\gamma_i,0)}g_1)(x,y)$ exists
and equals $(D_{(\gamma_i,0)}D_{(0,\eta)}g_1)(x,y)$, thus we get

\begin{align*}
	(D_{(\gamma_i,0)}\cdots D_{(\gamma_1,0)}D_{(0,\eta)}f)(x,y)
	&= (D_{(\gamma_i,0)}D_{(0,\eta)}g_1)(x,y)\\
	&= (D_{(0,\eta)}D_{(\gamma_i,0)}g_1)(x,y)\\
	&= (D_{(0,\eta)}D_{(\gamma_i,0)}\cdots D_{(\gamma_1,0)}f)(x,y).
\end{align*}

Hence the assertion holds for $j=1$.
\vspace{1mm}

Now, let $2\leq j\leq l$, $1\leq i\leq k$, $\gamma_1,\ldots,\gamma_i\in\mathfrak{L}(G)$,
$\eta_1,\ldots,\eta_j\in~\mathfrak{L}(H)$ and $(x,y)\in U\times V$. By Remark \ref{rem:f-Ck-or-Ckl-then-relations-to-derivatives-top-gr},
the map

\begin{align*}
	g_2:U\times V\to E,\quad (x,y)\mapsto (D_{(0,\eta_{j-1})}\cdots D_{(0,\eta_1)}f)(x,y)
\end{align*}

is $C^{k,1}$, whence we have

\begin{align}\label{eq:der-gamma-der-eta-f}
	&(D_{(0,\eta_j)}D_{(\gamma_i,0)}\cdots D_{(\gamma_1,0)}D_{(0,\eta_{j-1})}\cdots D_{(0,\eta_1)}f)(x,y)\nonumber\\
	&= (D_{(0,\eta_j)}D_{(\gamma_i,0)}\cdots D_{(\gamma_1,0)}g_2)(x,y)\nonumber\\
	&= (D_{(\gamma_i,0)}\cdots D_{(\gamma_1,0)}D_{(0,\eta_j)}g_2)(x,y\nonumber)\\
	&= (D_{(\gamma_i,0)}\cdots D_{(\gamma_1,0)}D_{(0,\eta_j)}\cdots D_{(0,\eta_1)}f)(x,y),
\end{align}

using the first part of the proof. But we also have

\begin{align}\label{eq:der-eta-der-gamma-f}
	&(D_{(0,\eta_j)}D_{(\gamma_i,0)}\cdots D_{(\gamma_1,0)}D_{(0,\eta_{j-1})}\cdots D_{(0,\eta_1)}f)(x,y)\nonumber\\
	&=(D_{(0,\eta_j)}D_{(0,\eta_{j-1})}\cdots D_{(0,\eta_1)}D_{(\gamma_i,0)}\cdots D_{(\gamma_1,0)}f)(x,y),
\end{align}

by induction, whence \eqref{eq:der-eta-der-gamma-f} equals \eqref{eq:der-gamma-der-eta-f}, that is

\begin{align*}
	&(D_{(0,\eta_j)}\cdots D_{(0,\eta_1)}D_{(\gamma_i,0)}\cdots D_{(\gamma_1,0)}f)(x,y)\\
	&=(D_{(\gamma_i,0)}\cdots D_{(\gamma_1,0)}D_{(0,\eta_j)}\cdots D_{(0,\eta_1)}f)(x,y),
\end{align*}

and the proof is finished.
\end{proof}

\begin{corollary}\label{cor:f-Ckl-iff-g-Clk-top-gr}
Let $U\subseteq G$ and $V\subseteq H$ be open subsets of topological groups $G$ and $H$, let $E$ be a
locally convex space and $k,l\in\Natzeroinfty$. A map $f:U\times V\to E$ is $C^{k,l}$ if and only if the map

\begin{align*}
	g:V\times U\to E,\quad (y,x)\mapsto f(x,y)
\end{align*}

is $C^{l,k}$. Moreover, we have

\begin{align*}
	d^{(j,i)}g(y,x,\eta_1,\ldots,\eta_j,\gamma_1,\ldots,\gamma_i)=d^{(i,j)}f(x,y,\gamma_1,\ldots,\gamma_i,\eta_1,\ldots,\eta_j)
\end{align*}

for each $x\in U$, $y\in V$, $i,j\in\Natural_0$ with $i\leq k$, $j\leq l$ and $\gamma_1,\ldots,\gamma_i\in\mathfrak{L}(G)$,
$\eta_1,\ldots,\eta_j\in\mathfrak{L}(H)$.
\end{corollary}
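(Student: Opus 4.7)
The strategy is to reduce the statement to Proposition \ref{prop:f-Ckl-then-Deta-Dgamma-eq-Dgamma-Deta-top-gr} via a direct translation between derivatives of $f$ and of $g$. By symmetry, only one direction needs to be proved, so I would assume $f$ is $C^{k,l}$ and show that $g$ is $C^{l,k}$ together with the claimed formula; the converse follows by applying the same argument to $g$ in place of $f$.

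The first step is to unwind definitions. For $g(y,x):=f(x,y)$, the first slot belongs to $V\subseteq H$, so a derivative $D_{(\eta,0)}g$ is taken in the $H$-direction and a derivative $D_{(0,\gamma)}g$ in the $G$-direction. A one-line computation from the definition shows
\begin{align*}
(D_{(\eta,0)}g)(y,x)=(D_{(0,\eta)}f)(x,y),\qquad (D_{(0,\gamma)}g)(y,x)=(D_{(\gamma,0)}f)(x,y),
\end{align*}
so iterating (using that the derivative of the swapped function is again swapped), the iterated derivative defining $d^{(j,i)}g$ equals
\begin{align*}
(D_{(\eta_j,0)}\cdots D_{(\eta_1,0)}D_{(0,\gamma_i)}\cdots D_{(0,\gamma_1)}g)(y,x)=(D_{(0,\eta_j)}\cdots D_{(0,\eta_1)}D_{(\gamma_i,0)}\cdots D_{(\gamma_1,0)}f)(x,y),
\end{align*}
provided the right-hand side exists. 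Here Proposition \ref{prop:f-Ckl-then-Deta-Dgamma-eq-Dgamma-Deta-top-gr} is the main input: since $f$ is $C^{k,l}$, the mixed iterated derivative on the right exists and coincides with $d^{(i,j)}f(x,y,\gamma_1,\ldots,\gamma_i,\eta_1,\ldots,\eta_j)$. This simultaneously proves existence of the derivatives needed for $g$ and gives the formula stated in the corollary.

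It remains to check continuity of $d^{(j,i)}g:V\times U\times\mathfrak{L}(H)^j\times\mathfrak{L}(G)^i\to E$. The coordinate-swap map
\begin{align*}
\sigma:V\times U\times\mathfrak{L}(H)^j\times\mathfrak{L}(G)^i\to U\times V\times\mathfrak{L}(G)^i\times\mathfrak{L}(H)^j
\end{align*}
is a homeomorphism, and by the displayed identity we have $d^{(j,i)}g=d^{(i,j)}f\circ\sigma$, so continuity is inherited from the continuity of $d^{(i,j)}f$ that comes with $f$ being $C^{k,l}$. Together, existence, the explicit formula, and continuity show that $g$ is $C^{l,k}$; the converse direction is identical after interchanging the roles of $f$ and $g$.

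No serious obstacle is expected; the only place where real content enters is the appeal to Proposition \ref{prop:f-Ckl-then-Deta-Dgamma-eq-Dgamma-Deta-top-gr} to commute the blocks of $\gamma$- and $\eta$-derivatives, since the definition of $C^{k,l}$ prescribes a fixed order (all $\eta$-derivatives first, then all $\gamma$-derivatives) that is opposite to the order produced by rewriting $g$'s derivatives in terms of $f$'s. Everything else is bookkeeping with definitions and the swap homeomorphism.
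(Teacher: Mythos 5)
Your proposal is correct and takes essentially the same route as the paper: identify the iterated derivatives of $g$ with the block-reversed iterated derivatives of $f$ by unwinding definitions, invoke Proposition \ref{prop:f-Ckl-then-Deta-Dgamma-eq-Dgamma-Deta-top-gr} to commute the two blocks, and get continuity from the coordinate-swap homeomorphism. The only cosmetic difference is that the paper treats the cases $k=0$ or $l=0$ separately by direct computation (since the proposition is stated only for $i,j\geq 1$), whereas you fold them into the uniform argument, where they are trivial because no commutation is needed.
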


\begin{proof}
First, we assume that $l=0$, that is, $f:U\times V\to E$ is $C^{k,0}$. Then for $x\in U$, $y\in V$ and
$\gamma\in\mathfrak{L}(G)$ we have

\begin{align*}
	d^{(1,0)}f(x,y,\gamma)
	&=\lim_{t\to0}\frac{1}{t}(f(x\cdot\gamma(t),y)-f(x,y))\\
	&=\lim_{t\to0}\frac{1}{t}(g(y,x\cdot\gamma(t))-g(y,x))
	=d^{(0,1)}g(y,x,\gamma),
\end{align*}

and similarly we get $d^{(0,i)}g(y,x,\gamma_1,\ldots,\gamma_i)=d^{(i,0)}f(x,y,\gamma_1,\ldots,\gamma_i)$
for each $i\in\Natural$ with $i\leq k$ and $\gamma_1,\ldots,\gamma_i\in\mathfrak{L}(G)$. The obtained
maps $d^{(0,i)}g:V\times U\times\mathfrak{L}(G)^i\to E$ are obviously continuous, hence $g$ is $C^{0,k}$.
The other implication, as well as the case $k=0$, can be proven analogously. 
\vspace{1mm}

If $k,l\geq 1$, then the assertion follows immediately from Proposition
\ref{prop:f-Ckl-then-Deta-Dgamma-eq-Dgamma-Deta-top-gr}.
\end{proof}

\begin{remark}\label{rem:f-Ckl-then-relations-to-derivatives-with-schwarz-top-gr}
Using Remark \ref{rem:f-Ck-or-Ckl-then-relations-to-derivatives-top-gr}
and Corollary \ref{cor:f-Ckl-iff-g-Clk-top-gr},
we can easily show that if $f:U\times V\to E$ is $C^{k,l}$, then
for all $i,j\in\Natural_0$ with $i\leq k$, $j\leq l$ and fixed $\gamma_1,\ldots,\gamma_i\in\mathfrak{L}(G)$, 
$\eta_1,\ldots,\eta_j\in\mathfrak{L}(H)$ the maps

\begin{align*}
	D_{(\gamma_i,0)}\cdots D_{(\gamma_1,0)}D_{(0,\eta_j)}\cdots D_{(0,\eta_1)}f:U\times V\to E
\end{align*}

are $C^{k-i,l-j}$.
\end{remark}

\hspace{4mm}
The following lemma will be useful:

\begin{lemma}\label{lem:f-Ck-or-Ckl-then-cont-lin-comp-f-Ck-or-Ckl-top-gr}
Let $U\subseteq G$, $V\subseteq H$ be open subsets of topological groups $G$ and $H$, let $E$, $F$ be locally convex spaces,
$\lambda:E\to F$ be a continuous and linear map and $k,l\in\Natzeroinfty$. 
\vspace{3mm}

(a) If $f:U\to E$ is a $C^k$-map, then the map $\lambda\circ f:U\to F$ is $C^k$.
\vspace{3mm}
	
(b) If $f:U\times V\to E$ is a $C^{k,l}$-map, then the map $\lambda\circ f:U\times V\to F$ is $C^{k,l}$.
\end{lemma}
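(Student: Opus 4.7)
The plan is to exploit the continuity and linearity of $\lambda$ to commute it with the difference quotients defining the directional derivatives. The key observation is that whenever $h:U\to E$ is a map and $D_\gamma h(x)$ exists for some $x\in U$, $\gamma\in\mathfrak{L}(G)$, then by linearity
\begin{align*}
\frac{1}{t}\bigl(\lambda(h(x\cdot\gamma(t)))-\lambda(h(x))\bigr)=\lambda\!\left(\frac{1}{t}(h(x\cdot\gamma(t))-h(x))\right),
\end{align*}
and continuity of $\lambda$ lets us pass the limit through, yielding $D_\gamma(\lambda\circ h)(x)=\lambda(D_\gamma h(x))$. The analogous identities hold for $D_{(\gamma,0)}$ and $D_{(0,\eta)}$ applied to maps on $U\times V$.

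For part (a), I would argue by induction on $i\in\{0,1,\ldots,k\}$ that $d^{(i)}(\lambda\circ f)(x,\gamma_1,\ldots,\gamma_i)$ exists for all $x,\gamma_1,\ldots,\gamma_i$ and coincides with $\lambda(d^{(i)}f(x,\gamma_1,\ldots,\gamma_i))$. The base case $i=0$ is clear since $\lambda$ is continuous, and the induction step is a direct application of the commutation identity above to $h=d^{(i)}f(\bullet,\gamma_1,\ldots,\gamma_i)$. Thus $d^{(i)}(\lambda\circ f)=\lambda\circ d^{(i)}f$ as maps $U\times\mathfrak{L}(G)^i\to F$, and this composition is continuous. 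Hence $\lambda\circ f$ is $C^k$.

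For part (b), the same mechanism produces, by induction on $i+j$ and separate use of the commutation identities for $D_{(\gamma,0)}$ and $D_{(0,\eta)}$,
\begin{align*}
d^{(i,j)}(\lambda\circ f)=\lambda\circ d^{(i,j)}f
\end{align*}
for all $i\leq k$, $j\leq l$, as maps $U\times V\times\mathfrak{L}(G)^i\times\mathfrak{L}(H)^j\to F$. Since $d^{(i,j)}f$ is continuous by hypothesis and $\lambda$ is continuous, $d^{(i,j)}(\lambda\circ f)$ is continuous, so $\lambda\circ f$ is $C^{k,l}$.

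There is essentially no genuine obstacle here; the argument is a routine induction resting on the single standard fact that continuous linear maps commute with directional derivatives. The only point requiring minor care is keeping the bookkeeping of iterated derivatives consistent in part (b), but this causes no real difficulty because each derivative operator interacts with $\lambda$ independently.
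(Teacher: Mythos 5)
Your argument is correct and is essentially the paper's own proof: commute $\lambda$ with the difference quotients by linearity, pass the limit through by continuity, and conclude $d^{(i)}(\lambda\circ f)=\lambda\circ d^{(i)}f$ (resp. $d^{(i,j)}(\lambda\circ f)=\lambda\circ d^{(i,j)}f$), whose continuity gives the claim. No substantive differences to report.
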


\begin{proof}
To prove $(a)$, let $x\in U$, $\gamma\in\mathfrak{L}(G)$ and $t\neq0$ small enough, then we have

\begin{align*}
	\frac{\lambda(f(x\cdot\gamma(t)))-\lambda(f(x))}{t}=\lambda\left(\frac{f(x\cdot\gamma(t))-f(x)}{t}\right)
	\to\lambda(df(x,\gamma)),
\end{align*}

as $t\to0$, because $\lambda$ is assumed linear and continuous. Therefore, the derivative $d(\lambda\circ f)(x,\gamma)$ exists
and we have $d(\lambda\circ f)(x,\gamma)=(\lambda\circ df)(x,\gamma)$.

Proceeding similarly, for each $i\in\Natural$ with $i\leq k$, $\gamma_1,\ldots,\gamma_i\in\mathfrak{L}(G)$ we obtain
$d^{(i)}(\lambda\circ f)(x,\gamma_1,\ldots,\gamma_i)=(\lambda\circ d^{(i)}f)(x,\gamma_1,\ldots,\gamma_i)$.
Since each of the obtained maps $d^{(i)}(\lambda\circ f)=\lambda\circ d^{(i)}f:U\times\mathfrak{L}(G)^i\to F$ 
is continuous, we see that the map $\lambda\circ f$ is $C^k$.
\vspace{1mm}

Analogously, assertion $(b)$ can be proved showing that for each $i,j\in\Natural_0$ with $i\leq k$, $j\leq l$ we have
$d^{(i,j)}(\lambda\circ f)= \lambda\circ d^{(i,j)}f$.
\end{proof}

\hspace{4mm}
Let us introduce the following notation (the analogue for $C^1$-maps is Lemma \ref{lem:f-C1-iff-f[1]-ex-and-cont-top-gr}):

\begin{lemma}\label{lem:f-C10-iff-f[1-0]-ex-and-cont-top-gr-top-space}
Let $U\subseteq G$, $V\subseteq H$ be open subsets of  topological groups $G$ and $H$, let
$E$ be a locally convex space. A continuous map $f:U\times V\to E$ is $C^{1,0}$ if and only
if there exists a continuous map

\begin{align*}
	f^{[1,0]}:U^{[1]}\times V\to E,
\end{align*}

where

\begin{align*}
	U^{[1]}:=\{(x,\gamma,t)\in U\times\mathfrak{L}(G)\times\Real : x\cdot\gamma(t)\in U\},
\end{align*}

such that 

\begin{align*}
	f^{[1,0]}(x,\gamma,t,y)=\frac{1}{t}(f(x\cdot\gamma(t),y)-f(x,y))
\end{align*}

for each $(x,\gamma,t,y)\in U^{[1]}\times V$ with $t\neq 0$. 
\vspace{1mm}

In this case 
we have $d^{(1,0)}f(x,y,\gamma)=f^{[1,0]}(x,\gamma,0,y)$ for all $x\in U$, $y\in V$ 
and $\gamma\in\mathfrak{L}(G)$.
\end{lemma}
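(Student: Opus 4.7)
My plan is to split the equivalence into its two directions. The ``if'' direction will be immediate from continuity of $f^{[1,0]}$; the ``only if'' direction will define $f^{[1,0]}$ by the obvious formula, extend it by $d^{(1,0)}f$ at $t=0$, and then derive an integral representation to establish continuity at the fibres over $t=0$.

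For the ``if'' direction, assume a continuous $f^{[1,0]}:U^{[1]}\times V\to E$ is given satisfying the stated equality for $t\neq 0$. Since $(x,\gamma,0)\in U^{[1]}$ always (because $x\cdot\gamma(0)=x$), continuity of $f^{[1,0]}$ gives
$$\lim_{t\to 0}\tfrac{1}{t}\bigl(f(x\cdot\gamma(t),y)-f(x,y)\bigr)=\lim_{t\to 0}f^{[1,0]}(x,\gamma,t,y)=f^{[1,0]}(x,\gamma,0,y).$$
Thus $d^{(1,0)}f(x,y,\gamma)$ exists, equals $f^{[1,0]}(x,\gamma,0,y)$, and is continuous in $(x,y,\gamma)$ as a composition of continuous maps; together with the already-assumed continuity of $f$ this says $f\in C^{1,0}(U\times V,E)$ and establishes the asserted formula for $d^{(1,0)}f$.

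For the ``only if'' direction, assume $f$ is $C^{1,0}$. The set $U^{[1]}$ is open since $(x,\gamma,t)\mapsto x\cdot\gamma(t)$ is continuous. Define $f^{[1,0]}$ by the given formula for $t\neq 0$ and by $f^{[1,0]}(x,\gamma,0,y):=d^{(1,0)}f(x,y,\gamma)$ at $t=0$. Continuity at points with $t\neq 0$ is immediate from continuity of $f$. For continuity at a point $(x_0,\gamma_0,0,y_0)$ I would establish the integral representation
$$f^{[1,0]}(x,\gamma,t,y)=\int_0^1 d^{(1,0)}f(x\cdot\gamma(st),y,\gamma)\,ds$$
on all of $U^{[1]}\times V$. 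The key identity is that the curve $h(s):=f(x\cdot\gamma(st),y)$ satisfies $h'(s)=t\cdot d^{(1,0)}f(x\cdot\gamma(st),y,\gamma)$, using the homomorphism property $\gamma((s+r)t)=\gamma(st)\cdot\gamma(rt)$; the fundamental theorem of calculus then yields the formula for $t\neq 0$, while at $t=0$ the integrand is constant and reproduces the definition of $f^{[1,0]}(x,\gamma,0,y)$. Joint continuity of $f^{[1,0]}$ then follows from continuity of $d^{(1,0)}f$, compactness of $[0,1]$, and the standard continuous dependence of parameter-integrals on their parameters.

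The main obstacle is justifying this integral representation when $E$ is merely a Hausdorff locally convex space, without any completeness hypothesis, so that the weak integral of a continuous $E$-valued curve need not exist a priori. The standard remedy in the Gl\"ockner--Neeb framework is to reduce to the scalar case by composing with continuous linear functionals $\lambda:E\to\Real$: by Lemma \ref{lem:f-Ck-or-Ckl-then-cont-lin-comp-f-Ck-or-Ckl-top-gr}(b), $\lambda\circ f$ is $C^{1,0}$, so the classical one-variable fundamental theorem applies to $\lambda\circ f$; combined with the fact that the left-hand side of the putative identity is the already-existing element $f^{[1,0]}(x,\gamma,t,y)\in E$, Hahn--Banach then upgrades the scalar identities to the vector-valued one, and continuity of $f^{[1,0]}$ transfers from the scalar arguments.
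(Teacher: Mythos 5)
Your proof is correct and follows essentially the same route as the paper's: extend by $d^{(1,0)}f$ at $t=0$, derive the representation $f^{[1,0]}(x,\gamma,t,y)=\int_0^1 d^{(1,0)}f(x\cdot\gamma(st),y,\gamma)\,ds$ via the fundamental theorem of calculus for $C^1$-curves (your Hahn--Banach reduction is precisely the standard justification that this weak integral exists without completeness of $E$), and conclude continuity from the continuous parameter-dependence of such integrals. One small caveat: the integral representation cannot hold on all of $U^{[1]}\times V$, since for a general $(x,\gamma,t)\in U^{[1]}$ the intermediate points $x\cdot\gamma(st)$, $0\leq s\leq 1$, need not remain in $U$; but it does hold on a box neighborhood $U_{x_0}\times U_{\gamma_0}\times]-\varepsilon,\varepsilon[\,\times V$ of any point with $t=0$ (such a box is stable under $t\mapsto st$), which is all that is needed for continuity there.
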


\begin{proof}
First, assume that the map $f^{[1,0]}$ exists and is continuous. Then for $x\in U$, $y\in V$, $\gamma\in\mathfrak{L}(G)$
and $t\neq0$ small enough we have

\begin{align*}
	\frac{1}{t}(f(x\cdot\gamma(t),y)-f(x,y))=f^{[1,0]}(x,\gamma,t,y)\to f^{[1,0]}(x,\gamma,0,y)
\end{align*}

as $t\to0$. Hence $d^{(1,0)}f(x,y,\gamma)$ exists and is given by $f^{[1,0]}(x,\gamma,0,y)$, whence the map

\begin{align*}
	d^{(1,0)}f:U\times V\times\mathfrak{L}(G)\to E,\quad (x,y,\gamma)\mapsto f^{[1,0]}(x,\gamma,0,y)
\end{align*}

is continuous. Thus $f$ is $C^{1,0}$.
\vspace{1mm}

Conversely, let $f$ be a $C^{1,0}$-map. Then we define

\begin{align*}
	f^{[1,0]}:U^{[1]}\times V\to E,\quad 
	f^{[1,0]}(x,\gamma,t,y):=\funcDefTwo{\frac{f(x\cdot\gamma(t),y)-f(x,y)}{t}}{t\neq 0}{d^{(1,0)}f(x,y,\gamma)}{t=0.}
\end{align*}

Since $f$ is continuous, the map $f^{[1,0]}$ is continuous at each $(x,\gamma,t,y)$ with $t\neq0$. 
Given $x_0\in U$ and $\gamma_0\in\mathfrak{L}(G)$, we have $(x_0,\gamma_0,0)\in U^{[1]}$; let
$W:=U_{x_0}\times U_{\gamma_0}\times]-\varepsilon,\varepsilon[\subseteq U^{[1]}$ be an open neighborhood
of $(x_0,\gamma_0,0)$ in $U^{[1]}$, where $U_{x_0}\subseteq U$ and $U_{\gamma_0}\subseteq\mathfrak{L}(G)$ are open neighborhoods
of $x_0$ and $\gamma_0$, respectively, and $\varepsilon>0$.
Now, for fixed $(x,\gamma,y)\in U_{x_0}\times U_{\gamma_0}\times V$ we define the continuous curve

\begin{align*}
	h:]-\varepsilon,\varepsilon[\to E,\quad h(t):=f(x\cdot\gamma(t),y).
\end{align*}

Then for  $t\in]-\varepsilon,\varepsilon[$, $s\neq0$
with $t+s\in]-\varepsilon,\varepsilon[$ we have

\begin{align*}
	\frac{h(t+s)-h(t)}{s}
	&=\frac{f(x\cdot\gamma(t+s),y)-f(x\cdot\gamma(t),y)}{s}\\
	&=\frac{f(x\cdot\gamma(t)\cdot\gamma(s),y)-f(x\cdot\gamma(t),y)}{s}\to d^{(1,0)}f(x\cdot\gamma(t),y,\gamma)
\end{align*}

as $s\to 0$. Thus, the derivative $h'(t)$ exists and is given by $d^{(1,0)}f(x\cdot\gamma(t),y,\gamma)$. The so obtained
map $h':]-\varepsilon,\varepsilon[\to E$ is continuous, hence $h$ is a $C^1$-curve (see \cite{GlH-NeebK-Hprep}
for details on $C^1$-curves with values in locally convex spaces and also on weak integrals which we use in the
next step).
We use the Fundamental Theorem of Calculus (\cite[Proposition 1.1.5]{GlH-NeebK-Hprep})
and obtain for $t\neq0$

\begin{align*}
	f^{[1,0]}(x,\gamma,t,y)
	&=\frac{1}{t}(f(x\cdot\gamma(t),y)-f(x,y))
	=\frac{1}{t}(h(t)-h(0))\\
	&= \frac{1}{t}\int_0^t h'(\tau) d\tau
	= \frac{1}{t}\int_0^t d^{(1,0)}f(x\cdot\gamma(\tau),y,\gamma) d\tau\\
	&=  \frac{1}{t}\int_0^1 t d^{(1,0)}f(x\cdot\gamma(tu),y,\gamma) du
	= \int_0^1 d^{(1,0)}f(x\cdot\gamma(tu),y,\gamma) du.
\end{align*}

But if $t=0$, then

\begin{align*}
	\int_0^1 d^{(1,0)}f(x\cdot\gamma(0),y,\gamma) du
	= d^{(1,0)}f(x,y,\gamma)
	= f^{[1,0]}(x,\gamma,0,y),
\end{align*}

hence

\begin{align*}
	f^{[1,0]}(x,\gamma,t,y) = \int_0^1 d^{(1,0)}f(x\cdot\gamma(tu),y,\gamma) du
\end{align*}

for all $(x,\gamma,t,y)\in W\times V$. Since the map

\begin{align*}
	W\times V\times[0,1]\to E,\quad
	(x,\gamma,t,y,u)\mapsto d^{(1,0)}f(x\cdot\gamma(tu),y,\gamma)
\end{align*}

is continuous, also the parameter-dependent integral

\begin{align*}
	W\times V\to E,\quad
	(x,\gamma,t,y)\mapsto f^{[1,0]}(x,\gamma,t,y) =\int_0^1 d^{(1,0)}f(x\cdot\gamma(tu),y,\gamma) du
\end{align*}

is continuous (by \cite[Lemma 1.1.11]{GlH-NeebK-Hprep}),
in particular in $(x_0,\gamma_0,0,y)$. 
Consequently, $f^{[1,0]}$ is continuous.
\end{proof}

\hspace{4mm}
The following two propositions provide a relation between $C^k$- and $C^{k,l}$-maps on products
of topological groups (a version can also be found in 
\cite{BoseckH-CzichowskiG-RudolphK-P1981AnalTopGrGenLieTh}),
in particular, we will conclude that $C^{\infty,\infty}(U\times V,E)\cong C^\infty(U\times V,E)$
as topological vector spaces (Corollary \ref{cor:f-C-infty-infty-iff-f-C-infty-top-gr}).

\begin{proposition}\label{prop:f-Ckk-then-f-Ck-top-gr}
Let $U\subseteq G$, $V\subseteq H$ be open subsets of topological groups $G$ and $H$, let $E$ be a locally convex space
and $k\in\Natzeroinfty$. If $f:U\times V\to E$ is $C^{k,k}$, then $f$ is $C^k$.

Moreover, the inclusion map

\begin{align*}
	\Psi:C^{k,k}(U\times V,E)\to C^k(U\times V,E),\quad f\mapsto f
\end{align*}

is continuous and linear.
\end{proposition}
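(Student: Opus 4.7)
The plan is to prove, by induction on $i \leq k$, the decomposition formula
\begin{align*}
D_{\psi_i}\cdots D_{\psi_1} f = \sum_{\epsilon \in \{0,1\}^i} D_{\psi_i^{\epsilon_i}}\cdots D_{\psi_1^{\epsilon_1}} f
\end{align*}
for $f \in C^{k,k}(U\times V, E)$ and $\psi_r = (\gamma_r, \eta_r) \in \mathfrak{L}(G\times H)$, with the conventions $\psi_r^0 := (\gamma_r, 0)$ and $\psi_r^1 := (0, \eta_r)$. Once this is established, each summand on the right, after repeated use of Proposition \ref{prop:f-Ckl-then-Deta-Dgamma-eq-Dgamma-Deta-top-gr} to commute adjacent operators of the two types, equals $d^{(\mu, \nu)}f$ at suitable subtuples of the $\gamma_r$'s and $\eta_r$'s, with $\mu + \nu = i$ and $\mu, \nu \leq k$. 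Via the homeomorphism of Remark \ref{rem:comp-and-prod-one-par-subgr-top-gr} between $\mathfrak{L}(G\times H)^i$ and $\mathfrak{L}(G)^i\times \mathfrak{L}(H)^i$, this displays $d^{(i)}f$ as a continuous function on $U\times V\times \mathfrak{L}(G\times H)^i$, whence $f \in C^k(U\times V, E)$.

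The base case $i = 1$ is the main technical obstacle; I would prove $D_{(\gamma,\eta)}f = D_{(\gamma,0)}f + D_{(0,\eta)}f$ for $f \in C^{1,1}$ via the telescoping identity
\begin{align*}
\frac{f(x\gamma(t), y\eta(t)) - f(x,y)}{t} = \frac{f(x\gamma(t), y\eta(t)) - f(x, y\eta(t))}{t} + \frac{f(x, y\eta(t)) - f(x,y)}{t}.
\end{align*}
The second summand tends to $D_{(0,\eta)}f(x,y)$ by definition, while the first equals $f^{[1,0]}(x, \gamma, t, y\eta(t))$ and, thanks to the continuity of $f^{[1,0]}$ on $U^{[1]}\times V$ supplied by Lemma \ref{lem:f-C10-iff-f[1-0]-ex-and-cont-top-gr-top-space}, tends to $f^{[1,0]}(x, \gamma, 0, y) = D_{(\gamma,0)}f(x,y)$ as $t \to 0$.

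For the inductive step, applying $D_{\psi_i}$ to the formula for $i - 1$ requires each summand $D_{\psi_{i-1}^{\epsilon_{i-1}}} \cdots D_{\psi_1^{\epsilon_1}} f$ to be of class $C^{1,1}$, so that the base case splits $D_{\psi_i}$ as $D_{(\gamma_i, 0)} + D_{(0,\eta_i)}$ on it. Reordering via Proposition \ref{prop:f-Ckl-then-Deta-Dgamma-eq-Dgamma-Deta-top-gr} identifies this summand with some $d^{(\mu,\nu)}f$ where $\mu + \nu = i - 1$ and $\mu, \nu \leq k - 1$; by Remark \ref{rem:f-Ckl-then-relations-to-derivatives-with-schwarz-top-gr}, this map is of class $C^{k-\mu, k-\nu}$, in particular $C^{1,1}$, as required.

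The continuity of $\Psi$ follows from the initial topology on $C^k(U\times V, E)$, which reduces the task to continuity of $d^{(i)}\circ \Psi: C^{k,k}(U\times V, E) \to C(U\times V\times \mathfrak{L}(G\times H)^i, E)_{c.o.}$ for each $i \leq k$. The decomposition formula expresses this as a finite sum of maps $f \mapsto d^{(\mu(\epsilon),\nu(\epsilon))}f \circ \phi_\epsilon$, where $\phi_\epsilon$ is a fixed continuous map between the relevant parameter spaces (built from the homeomorphism of Remark \ref{rem:comp-and-prod-one-par-subgr-top-gr} and suitable projections). Since pull-back along a continuous map is continuous with respect to the compact-open topology, and $f \mapsto d^{(\mu,\nu)}f$ is continuous by the very definition of the topology on $C^{k,k}(U\times V, E)$, continuity of $\Psi$ follows; linearity is immediate.
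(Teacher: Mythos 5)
Your proposal is correct and follows essentially the same route as the paper: the sum over $\epsilon\in\{0,1\}^i$ is exactly the paper's sum over partitions $I_{j,i}$ of $\{1,\ldots,i\}$, the base case is the same telescoping argument via $f^{[1,0]}$ (you telescope through $f(x,y\cdot\eta(t))$ instead of $f(x\cdot\gamma(t),y)$, which lets you avoid the auxiliary swapped map $g^{[1,0]}$ -- a cosmetic simplification), and the induction step and the continuity arguments for $d^{(i)}f$ and $\Psi$ coincide with the paper's. The only difference in bookkeeping is that you reorder each summand into the canonical $d^{(\mu,\nu)}$ form at the end via repeated use of Proposition \ref{prop:f-Ckl-then-Deta-Dgamma-eq-Dgamma-Deta-top-gr}, whereas the paper keeps every summand in canonical form throughout the induction; both work.
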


\begin{proof}
The case $k=0$ is trivial. For $k\geq1$, we show by induction on $i\in\Natural$ with $i\leq k$ that for all $(x,y)\in U\times V$,
$(\gamma_1,\eta_1),\ldots,(\gamma_i,\eta_i)\in\mathfrak{L}(G\times H)$ the derivatives of $f$ are given by
\begin{align}\label{eq:Ck-derivatives-from-Ckk}
	d^{(i)}f((x,y),(\gamma_1,\eta_1),\ldots,&(\gamma_i,\eta_i))\\
	&=\sum_{j=0}^i\sum_{I_{j,i}}
	d^{(j,i-j)}f(x,y,\gamma_{r_1},\ldots,\gamma_{r_j},\eta_{s_1},\ldots,\eta_{s_{i-j}})\nonumber
\end{align}

where $I_{j,i}:=\{r_1,\ldots,r_j\}\dot{\cup}\{s_1,\ldots,s_{i-j}\}=\{1,\ldots,i\}$.
\vspace{1mm}

\textit{Induction start:} Let $(x,y)\in U\times V$ and $(\gamma,\eta)\in\mathfrak{L}(G\times H)$, that
is, $\gamma\in\mathfrak{L}(G)$ and $\eta\in\mathfrak{L}(H)$, see Remark 
\ref{rem:comp-and-prod-one-par-subgr-top-gr}.
For $t\neq0$ small enough we have

\begin{align*}
	&\frac{f((x,y)\cdot(\gamma(t),\eta(t)))-f(x,y)}{t}\\
	&=\frac{f(x\cdot\gamma(t),y\cdot\eta(t))-f(x,y)}{t}\\
	&=\frac{f(x\cdot\gamma(t),y\cdot\eta(t))-f(x\cdot\gamma(t),y)}{t}+\frac{f(x\cdot\gamma(t),y)-f(x,y)}{t}\\
	&=\frac{g(y\cdot\eta(t),x\cdot\gamma(t))-g(y,x\cdot\gamma(t))}{t}+\frac{f(x\cdot\gamma(t),y)-f(x,y)}{t},
\end{align*}

where $g$ is the map $g:V\times U\to E, (y,x)\mapsto f(x,y)$.
By Corollary \ref{cor:f-Ckl-iff-g-Clk-top-gr},
the map $g$ is $C^{1,1}$, whence the map $g^{[1,0]}$ is defined
and continuous, as well as $f^{[1,0]}$ (see Lemma \ref{lem:f-C10-iff-f[1-0]-ex-and-cont-top-gr-top-space}).
Thus we have

\begin{align*}
	&\frac{g(y\cdot\eta(t),x\cdot\gamma(t))-g(y,x\cdot\gamma(t))}{t}+\frac{f(x\cdot\gamma(t),y)-f(x,y)}{t}\\
	&= g^{[1,0]}(y,\eta,t,x\cdot\gamma(t))+f^{[1,0]}(x,\gamma,t,y)\\
	&\to g^{[1,0]}(y,\eta,0,x)+f^{[1,0]}(x,\gamma,0,y)
\end{align*}

as $t\to0$. Therefore, the derivative $df((x,y),(\gamma,\eta))$ exists and is given by

\begin{align*}
	df((x,y),(\gamma,\eta))
	&=g^{[1,0]}(y,\eta,0,x)+f^{[1,0]}(x,\gamma,0,y)\\
	&= d^{(1,0)}g(y,x,\eta)+d^{(1,0)}f(x,y,\gamma)\\
	&=d^{(0,1)}f(x,y,\eta)+d^{(1,0)}f(x,y,\gamma).
\end{align*}

\textit{Induction step:} Now, let $2\leq i\leq k$, $(x,y)\in U\times V$, 
$(\gamma_1,\eta_1),\ldots,(\gamma_i,\eta_i)\in\mathfrak{L}(G\times H)$. Then for $t\neq0$ small enough we have

\begin{align*}
	&\frac{1}{t}\left(d^{(i-1)}f((x\cdot\gamma_i(t),y\cdot\eta_i(t)),(\gamma_1,\eta_1),\ldots,(\gamma_{i-1},\eta_{i-1}))\right.\\
	&\left.\mbox{ }-d^{(i-1)}f((x,y),(\gamma_1,\eta_1),\ldots,(\gamma_{i-1},\eta_{i-1}))\right)\\
	&=\sum_{j=0}^{i-1}\sum_{I_{j,i-1}} \frac{1}{t}
	\left(d^{(j,i-j-1)}f(x\cdot\gamma_i(t),y\cdot\eta_i(t),\gamma_{r_1},\ldots,\gamma_{r_j},\eta_{s_1},\ldots,\eta_{s_{i-j-1}})\right.\\
	&\left.\mbox{ }- d^{(j,i-j-1)}f(x,y,\gamma_{r_1},\ldots,\gamma_{r_j},\eta_{s_1},\ldots,\eta_{s_{i-j-1}})\right)\\
	&=\sum_{j=0}^{i-1}\sum_{I_{j,i-1}} \frac{1}{t}
	\left((D_{(\gamma_{r_j},0)}\cdots D_{(\gamma_{r_1},0)}D_{(0,\eta_{s_{i-j-1}})}\cdots D_{(0,\eta_1)}f)(x\cdot\gamma_i(t),y\cdot\eta_i(t))\right.\\
	&\left.\mbox{ }-(D_{(\gamma_{r_j},0)}\cdots D_{(\gamma_{r_1},0)}D_{(0,\eta_{s_{i-j-1}})}\cdots D_{(0,\eta_1)}f)(x,y)\right).
\end{align*}

Each of the maps

\begin{align*}
	D_{(\gamma_{r_j},0)}\cdots D_{(\gamma_{r_1},0)}D_{(0,\eta_{s_{i-j-1}})}\cdots D_{(0,\eta_1)}f:U\times V\to E
\end{align*}

is $C^{1,1}$ (see Remark \ref{rem:f-Ckl-then-relations-to-derivatives-with-schwarz-top-gr}),
hence $C^1$ and we have

\begin{align*}
	&\sum_{j=0}^{i-1}\sum_{I_{j,i-1}} \frac{1}{t}
	\left((D_{(\gamma_{r_j},0)}\cdots D_{(\gamma_{r_1},0)}D_{(0,\eta_{s_{i-j-1}})}\cdots D_{(0,\eta_1)}f)(x\cdot\gamma_i(t),y\cdot\eta_i(t))\right.\\
	&\left.\mbox{ }-(D_{(\gamma_{r_j},0)}\cdots D_{(\gamma_{r_1},0)}D_{(0,\eta_{s_{i-j-1}})}\cdots D_{(0,\eta_1)}f)(x,y)\right)\\
	&\to \sum_{j=0}^{i-1}\sum_{I_{j,i-1}}
	\left((D_{(\gamma_{r_j},0)}\cdots D_{(\gamma_{r_1},0)}D_{(0,\eta_i)}D_{(0,\eta_{s_{i-j-1}})}\cdots D_{(0,\eta_1)}f)(x,y)\right.\\
	&\left.\mbox{ }+(D_{(\gamma_i,0)}D_{(\gamma_{r_j},0)}\cdots D_{(\gamma_{r_1},0)}D_{(0,\eta_{s_{i-j-1}})}\cdots D_{(0,\eta_1)}f)(x,y)\right)\\
	&=\sum_{j=0}^{i}\sum_{I_{j,i}}d^{(j,i-j)}f(x,y,\gamma_{r_1},\ldots,\gamma_{r_j},\eta_{s_1},\ldots,\eta_{s_{i-j}})
\end{align*}

as $t\to 0$ (using Proposition \ref{prop:f-Ckl-then-Deta-Dgamma-eq-Dgamma-Deta-top-gr}). Thus \eqref{eq:Ck-derivatives-from-Ckk} holds, and we have

\begin{align*}
	d^{(i)}f=\sum_{j=0}^{i}\sum_{I_{j,i}} d^{(j,i-j)}f\circ g_{I_{j,i}},
\end{align*}

where

\begin{align*}
	g_{I_{j,i}}:U\times V\times\mathfrak{L}(G\times H)^i&\to U\times V\times\mathfrak{L}(G)^j\times\mathfrak{L}(H)^{i-j},\\
	(x,y,(\gamma_1,\eta_1),\ldots,(\gamma_i,\eta_i))&\mapsto(x,y,\gamma_{r_1},\ldots,\gamma_{r_j},\eta_{s_1},\ldots,\eta_{s_{i-j}})
\end{align*}

are continuous maps (see Remark \ref{rem:comp-and-prod-one-par-subgr-top-gr}).
Hence $f$ is $C^k$.
\vspace{1mm}

The linearity of the map $\Psi$ is clear. Further, each of the maps

\begin{align*}
	g_{I_{j,i}}^*:C(U\times V\times\mathfrak{L}(G)^j\times\mathfrak{L}(H)^{i-j},E)_{c.o}&\to C(U\times V\times\mathfrak{L}(G\times H)^i,E)_{c.o},\\
	h\mapsto h\circ g_{I_{j,i}}
\end{align*}

is continuous (see \cite[Appendix A.5]{GlH-NeebK-Hprep}
or \cite[Lemma B.9]{GlH2013ExpLawsUltra}),
whence each of the maps

\begin{align*}
	d^{(i)}\circ\Psi = \sum_{j=0}^{i}\sum_{I_{j,i}} g_{I_{j,i}}^*\circ d^{(j,i-j)}
\end{align*}

is continuous. Since the topology on $C^{k}(U\times V,E)$ is initial with respect to the maps $d^{(i)}$, the continuity of $\Psi$ follows.
\end{proof}

\begin{proposition}\label{prop:f-Ck+l-then-f-Ckl-top-gr}
Let $U\subseteq G$, $V\subseteq H$ be open subsets of topological groups $G$ and $H$, let $E$ be a
locally convex space and $k,l\in\Natural_0$. If $f:U\times V\to E$ is a $C^{k+l}$-map, then $f$ is $C^{k,l}$. 

Moreover, the inclusion map

\begin{align*}
	\Psi:C^{k+l}(U\times V,E)\to C^{k,l}(U\times V,E),\quad f\mapsto f
\end{align*}

is continuous and linear.
\end{proposition}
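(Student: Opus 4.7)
The plan is to exhibit each mixed partial derivative $d^{(i,j)}f$ as a restriction of the total derivative $d^{(i+j)}f$ to one-parameter subgroups of $G \times H$ of the two ``pure'' types $(\gamma,0)$ and $(0,\eta)$. The proof then reduces to the continuity of two natural maps.

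First, for each $i \leq k$ and $j \leq l$, I would introduce the continuous map
\begin{align*}
	\iota_{i,j}: U\times V \times \mathfrak{L}(G)^i \times \mathfrak{L}(H)^j &\to (U\times V) \times \mathfrak{L}(G\times H)^{i+j},\\
	(x,y,\gamma_1,\ldots,\gamma_i,\eta_1,\ldots,\eta_j) &\mapsto \bigl((x,y),(0,\eta_1),\ldots,(0,\eta_j),(\gamma_1,0),\ldots,(\gamma_i,0)\bigr),
\end{align*}
where $(\gamma,0)\in\mathfrak{L}(G\times H)$ denotes the one-parameter subgroup $t\mapsto(\gamma(t),e_H)$, and $(0,\eta)$ denotes $t\mapsto(e_G,\eta(t))$. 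Continuity of $\iota_{i,j}$ follows immediately from Remark \ref{rem:comp-and-prod-one-par-subgr-top-gr}, as the embeddings $\gamma\mapsto(\gamma,0)$ and $\eta\mapsto(0,\eta)$ are continuous (they are right inverses of the coordinate projections, which together form the homeomorphism described there).

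The key identification is that Definition \ref{def:Ckl-map-top-gr} computes precisely the iterated derivative along the sequence $(0,\eta_1),\ldots,(0,\eta_j),(\gamma_1,0),\ldots,(\gamma_i,0)$ of subgroups of $G\times H$. Since $f$ is $C^{k+l}$ and $i+j\leq k+l$, this iterated derivative exists at every point of $U\times V$ and is given by $d^{(i+j)}f$ evaluated on the corresponding tuple; cf.\ Definition \ref{def:Ck-map-top-gr}. Hence each $d^{(i,j)}f$ exists and satisfies the pointwise identity
\begin{align*}
	d^{(i,j)}f \;=\; d^{(i+j)}f \circ \iota_{i,j},
\end{align*}
so $d^{(i,j)}f$ is continuous as a composition of continuous maps. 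This shows $f$ is $C^{k,l}$. For continuity of $\Psi$, I would then observe that the pullback operator
\begin{align*}
	\iota_{i,j}^{*}: C\bigl((U\times V)\times\mathfrak{L}(G\times H)^{i+j},E\bigr)_{c.o.} \to C\bigl(U\times V\times\mathfrak{L}(G)^i\times\mathfrak{L}(H)^j,E\bigr)_{c.o.},\quad h\mapsto h\circ\iota_{i,j},
\end{align*}
is continuous and linear (by the standard continuity of pullback in compact-open topologies used in the proof of Proposition \ref{prop:f-Ckk-then-f-Ck-top-gr}). Combined with the identity above, this yields $d^{(i,j)}\circ\Psi = \iota_{i,j}^{*}\circ d^{(i+j)}$, which is continuous. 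Since the topology on $C^{k,l}(U\times V,E)$ is initial with respect to the family $(d^{(i,j)})_{i\leq k,\,j\leq l}$, continuity of $\Psi$ follows; linearity is evident.

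The only delicate point is bookkeeping: the convention in Definition \ref{def:Ck-map-top-gr} places $\gamma_1$ as the \emph{innermost} differentiation, so to recover $D_{(\gamma_i,0)}\cdots D_{(\gamma_1,0)} D_{(0,\eta_j)}\cdots D_{(0,\eta_1)} f$ one must list the $(0,\eta_\bullet)$ components before the $(\gamma_\bullet,0)$ components inside $\iota_{i,j}$, as written above. No Schwarz-type exchange is needed here, because we are merely rearranging entries in a single tuple fed into $d^{(i+j)}f$; the exchange of $\gamma$- and $\eta$-derivatives permitted by Proposition \ref{prop:f-Ckl-then-Deta-Dgamma-eq-Dgamma-Deta-top-gr} is not invoked at all.
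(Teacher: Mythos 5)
Your proof is correct and follows essentially the same route as the paper: both express $d^{(i,j)}f$ as $d^{(i+j)}f$ precomposed with a continuous reindexing map into $U\times V\times\mathfrak{L}(G\times H)^{i+j}$ built from the pure-type one-parameter subgroups $(\gamma,\varepsilon_H)$ and $(\varepsilon_G,\eta)$, and then deduce continuity of $\Psi$ from the continuity of the pullback operators and the initiality of the $C^{k,l}$-topology. Your only deviation is the careful ordering of the tuple (listing the $(0,\eta_\bullet)$ entries innermost so that no exchange of derivatives is needed), which if anything is slightly more scrupulous than the paper's version.
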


\begin{proof} We denote by $\varepsilon_G\in\mathfrak{L}(G)$ the constant map $\varepsilon_G:\Real\to G,t\mapsto e_G$,
where $e_G$ is the identity element of $G$, and $\varepsilon_H\in\mathfrak{L}(H)$ is defined analogously.

Let $x\in U$, $y\in V$, $\gamma_1,\ldots,\gamma_i\in\mathfrak{L}(G)$ and $\eta_1,\ldots,\eta_j\in\mathfrak{L}(H)$
for some $i,j\in\Natural_0$ with $i\leq k$, $j\leq l$.
Then we obviously have

\begin{align*}
	d^{(i,j)}f(x,y,&\gamma_1,\ldots,\gamma_i,\eta_1,\ldots,\eta_j)\\
	&=d^{(i+j)}f((x,y),(\gamma_1,\varepsilon_H),\ldots,(\gamma_i,\varepsilon_H),
	(\varepsilon_G,\eta_1),\ldots,(\varepsilon_G,\eta_j)).
\end{align*}

Each of the maps

\begin{align*}
	\rho_{i,j}:U\times V\times \mathfrak{L}(G)^i\times \mathfrak{L}(H)^j&\to U\times V\times \mathfrak{L}(G\times H)^{i+j}\\
	(x,y,\gamma_1,\ldots,\gamma_i,\eta_1,\ldots,\eta_j)&
	\mapsto(x,y,(\gamma_1,\varepsilon_H),\ldots(\gamma_i,\varepsilon_H),(\varepsilon_G,\eta_1),\ldots(\varepsilon_G,\eta_j))
\end{align*}

is continuous (see Remark \ref{rem:comp-and-prod-one-par-subgr-top-gr})
and we have

\begin{align*}
	d^{(i,j)}f=d^{(i+j)}f\circ\rho_{i,j}.
\end{align*}

Therefore, $f$ is $C^{k,l}$. 
\vspace{1mm}

The linearity of the map $\Psi$ is clear. Further, by \cite[Appendix A.5]{GlH-NeebK-Hprep}
(see also \cite[Lemma B.9]{GlH2013ExpLawsUltra}),
each of the maps 

\begin{align*}
	\rho_{i,j}^*:C(U\times V\times \mathfrak{L}(G\times H)^{i+j},E)_{c.o}&\to
	C(U\times V\times\mathfrak{L}(G)^i\times \mathfrak{L}(H)^j,E)_{c.o}\\
	h\mapsto h\circ\rho_{i,j}
\end{align*}

is continuous, whence each of the maps

\begin{align*}
	d^{(i,j)}\circ\Psi = \rho_{i,j}^*\circ d^{(i+j)}
\end{align*}

is continuous. Hence, the continuity of $\Psi$ follows, since the topology on the space $C^{k,l}(U\times V,E)$
is initial with respect to the maps $d^{(i,j)}$.
\end{proof}

\begin{corollary}\label{cor:f-C-infty-infty-iff-f-C-infty-top-gr}
Let $U\subseteq G$, $V\subseteq H$ be open subsets of topological groups $G$ and $H$, let $E$ be a locally convex
space. A map $f:U\times V\to E$ is $C^\infty$ if and only if $f$ is $C^{\infty,\infty}$. 

Moreover, the map

\begin{align*}
	\Psi:C^\infty(U\times V,E)\to C^{\infty,\infty}(U\times V,E),\quad f\mapsto f
\end{align*}

is an isomorphism of topological vector spaces.
\end{corollary}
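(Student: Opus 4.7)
The plan is to deduce this corollary from the two preceding propositions by taking a supremum over all finite orders of differentiability.

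First, I would establish the set-theoretic equality $C^\infty(U\times V,E)=C^{\infty,\infty}(U\times V,E)$. If $f$ is $C^\infty$, then for every $k,l\in\Natural_0$ the map $f$ is $C^{k+l}$, so Proposition \ref{prop:f-Ck+l-then-f-Ckl-top-gr} yields that $f$ is $C^{k,l}$; hence $f\in C^{\infty,\infty}(U\times V,E)$. Conversely, if $f$ is $C^{\infty,\infty}$, then for each $k\in\Natural_0$ the map $f$ is $C^{k,k}$, so Proposition \ref{prop:f-Ckk-then-f-Ck-top-gr} yields that $f$ is $C^k$; hence $f\in C^\infty(U\times V,E)$. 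This shows that $\Psi$ is a linear bijection (linearity is clear from $f\mapsto f$).

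Next I would check continuity of $\Psi:C^\infty(U\times V,E)\to C^{\infty,\infty}(U\times V,E)$. Since the codomain carries the initial topology with respect to the family $(d^{(i,j)})_{i,j\in\Natural_0}$, it suffices to prove that $d^{(i,j)}\circ\Psi$ is continuous for each $i,j$. From the proof of Proposition \ref{prop:f-Ck+l-then-f-Ckl-top-gr} we have the identity $d^{(i,j)}\circ\Psi=\rho_{i,j}^*\circ d^{(i+j)}$, where $d^{(i+j)}:C^\infty(U\times V,E)\to C(U\times V\times\mathfrak{L}(G\times H)^{i+j},E)_{c.o}$ is continuous by definition of the topology on $C^\infty$, and $\rho_{i,j}^*$ is continuous by the cited results on the compact-open topology. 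Their composition is therefore continuous, and continuity of $\Psi$ follows from the universal property of the initial topology.

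Finally I would argue the symmetric statement for $\Psi^{-1}$. The domain $C^{\infty,\infty}(U\times V,E)$ carries the initial topology with respect to the family $(d^{(i,j)})_{i,j\in\Natural_0}$; in particular, the restriction maps into $C^{k,k}(U\times V,E)$ are continuous. For each $i\in\Natural_0$, the map $d^{(i)}\circ\Psi^{-1}$ factors as
\begin{align*}
C^{\infty,\infty}(U\times V,E)\to C^{i,i}(U\times V,E)\to C^i(U\times V,E)\to C(U\times V\times\mathfrak{L}(G\times H)^i,E)_{c.o},
\end{align*}
where the first arrow is continuous by the initial-topology property, the second arrow is continuous by Proposition \ref{prop:f-Ckk-then-f-Ck-top-gr}, and the third is continuous by the definition of the compact-open $C^i$-topology. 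Since $C^\infty(U\times V,E)$ carries the initial topology with respect to the family $(d^{(i)})_{i\in\Natural_0}$, continuity of $\Psi^{-1}$ follows, completing the proof that $\Psi$ is an isomorphism of topological vector spaces. There is no real obstacle here; the entire argument is a clean assembly of Propositions \ref{prop:f-Ckk-then-f-Ck-top-gr} and \ref{prop:f-Ck+l-then-f-Ckl-top-gr} using initial-topology arguments.
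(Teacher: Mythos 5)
Your proposal is correct and follows exactly the route the paper intends: the paper's proof is the one-line remark that the corollary is an immediate consequence of Propositions \ref{prop:f-Ckk-then-f-Ck-top-gr} and \ref{prop:f-Ck+l-then-f-Ckl-top-gr}, and you have simply filled in the routine set-theoretic and initial-topology details (the identity $d^{(i,j)}\circ\Psi=\rho_{i,j}^*\circ d^{(i+j)}$ and the factorization of $d^{(i)}\circ\Psi^{-1}$ through $C^{i,i}$) that the authors leave implicit. No gaps.
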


\begin{proof}
The assertion is an immediate consequence of Propositions \ref{prop:f-Ckk-then-f-Ck-top-gr}
and 
\ref{prop:f-Ck+l-then-f-Ckl-top-gr}.
\end{proof}

\section{The exponential law}

\hspace{4mm}
We recall the classical Exponential Law for spaces of continuous functions, which can be found, for example,
in \cite[Appendix A.5]{GlH-NeebK-Hprep}:

\begin{proposition}\label{prop:classical-exp-law}
Let $X_1$, $X_2$, $Y$ be topological spaces. If $f:X_1\times X_2\to Y$ is a continuous map, then also the map

\begin{equation*}
	f^\vee:X_1\to C(X_2,Y)_{c.o},\quad x\mapsto f^\vee(x):=f(x,\bullet)
\end{equation*}

is continuous. Moreover, the map

\begin{equation*}
	\Phi:C(X_1\times X_2,Y)_{c.o}\to C(X_1,C(X_2,Y))_{c.o},\quad f\mapsto f^\vee
\end{equation*}

is a topological embedding.

If $X_2$ is locally compact or $X_1\times X_2$ is a k-space, or $X_1\times X_2$ is a $k_\Real$-space
and $Y$ is completely regular, then $\Phi$ is a homeomorphism.
\end{proposition}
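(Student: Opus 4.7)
The plan is to prove the three assertions in order: (i) continuity of $f^\vee$ for continuous $f$, (ii) that $\Phi$ is always an injective topological embedding, and (iii) that the hypotheses in the last clause force $\Phi$ to be surjective, hence a homeomorphism. For (i), it suffices to check that the preimage under $f^\vee$ of a subbasic set $\lfloor K,V\rfloor = \{g\in C(X_2,Y) : g(K)\subseteq V\}$ (with $K\subseteq X_2$ compact and $V\subseteq Y$ open) is open in $X_1$: given $x_0$ with $f(\{x_0\}\times K)\subseteq V$, the compact slice $\{x_0\}\times K$ sits inside the open set $f^{-1}(V)$, and the tube lemma delivers an open $U\ni x_0$ with $U\times K\subseteq f^{-1}(V)$. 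Injectivity of $\Phi$ is immediate, and its continuity reduces to the identity $\Phi^{-1}(\lfloor L,\lfloor M,V\rfloor\rfloor)=\lfloor L\times M,V\rfloor$ for compact $L\subseteq X_1$, $M\subseteq X_2$ and open $V\subseteq Y$, since $L\times M$ is compact in $X_1\times X_2$.

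For the unconditional embedding property, I need to show that every compact-open subbasic set $\lfloor K,V\rfloor$ with $K\subseteq X_1\times X_2$ compact is already open in the initial topology induced by $\Phi$. I would reduce to a compact factor. Set $L := \pr_1(K)$ and $M := \pr_2(K)$, both compact, so that $K\subseteq L\times M$; the restriction maps $r_1 : C(X_1\times X_2,Y)\to C(L\times M,Y)$ and $r_2 : C(X_1,C(X_2,Y))\to C(L,C(M,Y))$ are continuous in the compact-open topologies and satisfy $r_2\circ\Phi = \Phi_{L,M}\circ r_1$, where $\Phi_{L,M}$ is the exponential map for $L\times M$. Since $M$ is compact Hausdorff, hence locally compact, the locally compact case handled in step (iii) below yields that $\Phi_{L,M}$ is a homeomorphism, whence $r_1 = \Phi_{L,M}^{-1}\circ r_2\circ\Phi$ is continuous from $C(X_1\times X_2,Y)$ when the latter carries the initial topology with respect to $\Phi$. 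Pulling back the open set $\{h\in C(L\times M,Y) : h(K)\subseteq V\}$ through $r_1$ recovers $\lfloor K,V\rfloor$, which is therefore open in the initial topology.

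For the surjectivity clause, given continuous $g : X_1\to C(X_2,Y)_{c.o}$, I would show that $g^\wedge(x,y) := g(x)(y)$ is continuous, so that $g = \Phi(g^\wedge)$. If $X_2$ is locally compact, the evaluation map $\ev : C(X_2,Y)_{c.o}\times X_2\to Y$ is continuous by a standard tube-lemma argument using local compactness, and $g^\wedge = \ev\circ(g\times\id_{X_2})$ is continuous. If $X_1\times X_2$ is a $k$-space, it suffices to verify continuity of $g^\wedge|_K$ for each compact $K\subseteq X_1\times X_2$; using $K\subseteq\pr_1(K)\times\pr_2(K)$ with both factors compact, the continuous restriction map $C(X_2,Y)\to C(\pr_2(K),Y)_{c.o}$ composed with $g|_{\pr_1(K)}$ reduces the problem to the locally compact factor $\pr_2(K)$, handled by the previous case. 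The $k_\Real$-space case with $Y$ completely regular is analogous, testing continuity via continuous maps $\psi : Y\to\Real$ (which initialize the topology of $Y$) and applying the $k_\Real$-criterion for real-valued functions.

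The main obstacle is the unconditional topological embedding step: without any local compactness or $k$-space hypothesis on $X_1\times X_2$, an arbitrary compact $K$ cannot generally be covered by compact product rectangles inside a prescribed open tube, so a naive subbase comparison fails. Routing the argument through the projections $L=\pr_1(K)$, $M=\pr_2(K)$ and the restriction maps, and then invoking the locally compact case on the compact product $L\times M$, circumvents this difficulty and makes the locally compact exponential law the logical keystone of the entire proposition.
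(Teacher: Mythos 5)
The paper itself offers no proof of this proposition; it is recalled from \cite[Appendix A.5]{GlH-NeebK-Hprep}, so your argument has to stand on its own. Most of it does: the tube-lemma argument for continuity of $f^\vee$, the identity $\Phi^{-1}(\lfloor L,\lfloor M,V\rfloor\rfloor)=\lfloor L\times M,V\rfloor$ for continuity of $\Phi$, and the three surjectivity arguments (evaluation for locally compact $X_2$; restriction to $\pr_1(K)\times\pr_2(K)$ for the $k$-space case; testing against $C(Y,\Real)$ in the $k_\Real$ case) are all correct. One small point you should make explicit in part (ii): the subbasic open sets of $C(X_1,C(X_2,Y)_{c.o})_{c.o}$ are the sets $\lfloor L,\mathcal{W}\rfloor$ with $\mathcal{W}$ an \emph{arbitrary} open subset of $C(X_2,Y)_{c.o}$, so checking preimages only of $\lfloor L,\lfloor M,V\rfloor\rfloor$ requires the (standard but not free) lemma that $\lfloor L,W\rfloor$ with $W$ running through a subbase of the target is again a subbase; sets of this form do not behave well under unions in the second argument.

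The genuine gap is in the embedding step. You reduce openness of $\Phi$ onto its image to the claim that $\Phi_{L,M}:C(L\times M,Y)_{c.o}\to C(L,C(M,Y)_{c.o})_{c.o}$ is a homeomorphism, which you say is ``handled in step (iii)''. But step (iii) only proves surjectivity; combined with (ii) this makes $\Phi_{L,M}$ a continuous bijection, and a continuous bijection of topological spaces need not be a homeomorphism --- there is no open mapping theorem available here. Continuity of $\Phi_{L,M}^{-1}$ is precisely an instance of the embedding property you are trying to prove, so as written the argument is circular and the hard half of the proposition is never established. The standard fix is direct: given $f\in\lfloor K,V\rfloor$, set $L:=\pr_1(K)$, $M:=\pr_2(K)$; for each $(x,y)\in K$ pick open sets $U\ni x$, $W\ni y$ with $U\times W\subseteq f^{-1}(V)$ and, using regularity of the compact Hausdorff spaces $L$ and $M$, shrink to relatively open $U'\ni x$, $W'\ni y$ whose closures $A:=\overline{U'}\subseteq U\cap L$ and $B:=\overline{W'}\subseteq W\cap M$ are compact; a finite subcover of $K$ by such $U_i'\times W_i'$ yields compact rectangles $A_i\times B_i$ covering $K$ with $f(A_i\times B_i)\subseteq V$, whence $f\in\bigcap_{i=1}^n\Phi^{-1}(\lfloor A_i,\lfloor B_i,V\rfloor\rfloor)=\bigcap_{i=1}^n\lfloor A_i\times B_i,V\rfloor\subseteq\lfloor K,V\rfloor$, showing that $\lfloor K,V\rfloor$ is open in the initial topology with respect to $\Phi$. (Equivalently you could first prove honestly that $\Phi_{L,M}$ is open for compact $L,M$, but that is the same covering argument, so the detour through restriction maps buys nothing.)
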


\hspace{4mm}
The following terminology is used here:

\begin{remark}
(a) A Hausdorff topological space $X$ is called a \textit{$k$-space} if functions $f:X\to Y$ to a topological space $Y$ are continuous
if and only if the restrictions $f\big|_K:K\to Y$ are continuous for all compact subsets $K\subseteq X$. All locally
compact spaces and all metrizable spaces are $k$-spaces.
\vspace{3mm}

(b) A Hausdorff topological space $X$ is called a \textit{$k_\Real$-space} if real-valued functions $f:X\to \Real$ are continuous
if and only if the restrictions $f\big|_K:K\to \Real$ are continuous for all compact subsets $K\subseteq X$. Each
$k$-space is a $k_\Real$-space, hence also each locally compact and each metrizable space is a $k_\Real$-space.
\vspace{3mm}

(c) A Hausdorff topological space $X$ is called \textit{completely regular} if its topology is initial with respect to the
set $C(X,\Real)$. Each Hausdorff locally convex space (moreover, each Hausdorff topological group) is
completely regular, see \cite{HewittE-RossKA1963AbstrHarmAnalysisI}.
\end{remark}

\begin{theorem}\label{thm:exp-law-Ckl-top-emb-top-gr}
Let $U\subseteq G$, $V\subseteq H$ be open subsets of topological groups $G$ and $H$, let $E$ be a locally convex
space and $k,l\in\Natzeroinfty$. Then the following holds:
\vspace{3mm}

(a) If a map $f:U\times V\to E$ is $C^{k,l}$, then the map

	\begin{align*}
		f^\vee(x):=f(x,\bullet):V\to E,\quad y\mapsto f^\vee(x)(y):=f(x,y)
	\end{align*}

	is $C^l$ for each $x\in U$ and the map

	\begin{align*}
		f^\vee:U\to C^l(V,E),\quad x\mapsto f^\vee(x)
	\end{align*}

	is $C^k$.
\vspace{3mm}
	
(b) The map

	\begin{align*}
		\Phi: C^{k,l}(U\times V,E)\to C^k(U,C^l(V,E)),\quad f\mapsto f^\vee
	\end{align*}

	is linear and a topological embedding.

\end{theorem}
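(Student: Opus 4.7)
The plan is to prove (a) by induction on $i\in\Natural_0$ with $i\le k$, establishing the identity
\begin{align*}
	d^{(i)}(f^\vee)(x,\gamma_1,\ldots,\gamma_i) = (D_{(\gamma_i,0)}\cdots D_{(\gamma_1,0)}f)^\vee(x)\in C^l(V,E)
\end{align*}
for all $x\in U$ and $\gamma_1,\ldots,\gamma_i\in\mathfrak{L}(G)$. That the right-hand side lies in $C^l(V,E)$ is the easy half: by Proposition \ref{prop:f-Ckl-then-Deta-Dgamma-eq-Dgamma-Deta-top-gr} and the $C^{k,l}$-hypothesis,
\begin{align*}
	d^{(j)}\bigl((D_{(\gamma_i,0)}\cdots D_{(\gamma_1,0)}f)^\vee(x)\bigr)(y,\eta_1,\ldots,\eta_j) = d^{(i,j)}f(x,y,\gamma_1,\ldots,\gamma_i,\eta_1,\ldots,\eta_j),
\end{align*}
which is continuous in $(y,\eta_1,\ldots,\eta_j)$. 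The base case $i=0$ is this identity with no $\gamma$'s, so $f^\vee(x)\in C^l(V,E)$ and $f^\vee:U\to C^l(V,E)$ is well-defined.

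The inductive step carries the content. Writing $\tilde f := D_{(\gamma_{i-1},0)}\cdots D_{(\gamma_1,0)}f$, I need to show
\begin{align*}
	\frac{1}{t}\bigl(\tilde f^\vee(x\cdot\gamma_i(t)) - \tilde f^\vee(x)\bigr) \longrightarrow (D_{(\gamma_i,0)}\tilde f)^\vee(x)
\end{align*}
in $C^l(V,E)$ as $t\to 0$. Since the topology on $C^l(V,E)$ is initial with respect to the $d^{(j)}$'s, this reduces, for each $j\le l$, to convergence of the $d^{(j)}$-images in $C(V\times\mathfrak{L}(H)^j,E)_{c.o}$. I would apply Lemma \ref{lem:f-C10-iff-f[1-0]-ex-and-cont-top-gr-top-space} (in the form allowed by Remark \ref{rem:f-Ck0-C0l-top-gr-top-space}, with the second factor a topological space) to the map
\begin{align*}
	h := d^{(0,j)}\tilde f : U\times(V\times\mathfrak{L}(H)^j)\to E.
\end{align*}
By Proposition \ref{prop:f-Ckl-then-Deta-Dgamma-eq-Dgamma-Deta-top-gr}, the $U$-derivative of $h$ along any $\gamma\in\mathfrak{L}(G)$ exists, equals a coordinate-permuted $d^{(i,j)}f$, and is jointly continuous, so $h$ is $C^{1,0}$ in this sense. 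Lemma \ref{lem:f-C10-iff-f[1-0]-ex-and-cont-top-gr-top-space} then yields a continuous $h^{[1,0]}:U^{[1]}\times V\times\mathfrak{L}(H)^j\to E$, and the classical exponential law (Proposition \ref{prop:classical-exp-law}) produces a continuous map $U^{[1]}\to C(V\times\mathfrak{L}(H)^j,E)_{c.o}$ whose continuity at $(x,\gamma_i,0)$ is exactly the required convergence of difference quotients. The continuity of $d^{(i)}(f^\vee):U\times\mathfrak{L}(G)^i\to C^l(V,E)$ then follows by the same recipe: composed with each $d^{(j)}$, it is, up to a coordinate swap on $U\times V\times\mathfrak{L}(G)^i\times\mathfrak{L}(H)^j$, the classical adjoint of the jointly continuous map $d^{(i,j)}f$, to which Proposition \ref{prop:classical-exp-law} applies.

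For (b), $\Phi$ is clearly linear and injective (since $f(x,y) = \Phi(f)(x)(y)$). To show that $\Phi$ is a topological embedding, I would match initial topologies. By transitivity, the topology on $C^k(U,C^l(V,E))$ is initial with respect to the composites $(d^{(j)})_*\circ d^{(i)}$ for $i\le k$, $j\le l$, where $(d^{(j)})_*$ denotes post-composition with $d^{(j)}$ (continuous by \cite[Appendix A.5]{GlH-NeebK-Hprep}). Using the identity from (a), $(d^{(j)})_*\circ d^{(i)}\circ\Phi$ sends $f$ to the classical adjoint of $d^{(i,j)}f\circ\tau$, where $\tau$ is the coordinate swap exchanging $\mathfrak{L}(G)^i$ and $V$. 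The swap induces a homeomorphism between the corresponding compact-open function spaces, and the classical adjoint is a topological embedding by Proposition \ref{prop:classical-exp-law}; since pulling back a topology through a topological embedding preserves initial topologies, the family $((d^{(j)})_*\circ d^{(i)}\circ\Phi)_{i,j}$ induces on $C^{k,l}(U\times V,E)$ the given topology, initial with respect to $(d^{(i,j)})_{i,j}$. Hence $\Phi$ is a topological embedding.

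The main obstacle I anticipate is the inductive step in (a): the pointwise existence of $D_{\gamma_i}(f^\vee)(x)$ is straightforward, but upgrading it to convergence in $C^l(V,E)$ forces simultaneous locally-uniform convergence of all $d^{(j)}$-images on compact subsets of $V\times\mathfrak{L}(H)^j$. Lemma \ref{lem:f-C10-iff-f[1-0]-ex-and-cont-top-gr-top-space}, which packages the difference quotient as the restriction to $t\neq 0$ of a jointly continuous function extending through $t=0$, is precisely the device that enables this upgrade.
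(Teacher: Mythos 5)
Your proposal is correct and follows the paper's strategy in all essentials: part (a) rests on the identity $d^{(i)}(f^\vee)(x,\gamma_1,\ldots,\gamma_i)=(D_{(\gamma_i,0)}\cdots D_{(\gamma_1,0)}f)^\vee(x)$ proved by induction on $i$ using Lemma \ref{lem:f-C10-iff-f[1-0]-ex-and-cont-top-gr-top-space} together with the classical exponential law, with Proposition \ref{prop:f-Ckl-then-Deta-Dgamma-eq-Dgamma-Deta-top-gr} supplying the $d^{(j)}$-level identities, and part (b) is the same initial-topology matching via the factorization $C(U\times\mathfrak{L}(G)^i,d^{(j)})\circ d^{(i)}\circ\Phi=\Psi_{i,j}\circ\rho_{i,j}^*\circ d^{(i,j)}$ into topological embeddings. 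The one point where you genuinely diverge is the inductive step of (a): the paper applies Lemma \ref{lem:f-C10-iff-f[1-0]-ex-and-cont-top-gr-top-space} only to $\tilde f=D_{(\gamma_{i-1},0)}\cdots D_{(\gamma_1,0)}f$ itself, obtaining convergence of the difference quotients $(\tilde f^{[1,0]})^\vee(x,\gamma_i,t)$ in $C(V,E)_{c.o}$ and leaving implicit why this suffices for convergence in the finer space $C^l(V,E)$; you instead apply the lemma to $h=d^{(0,j)}\tilde f$ for every $j\le l$ (legitimate by Remark \ref{rem:f-Ck0-C0l-top-gr-top-space} and Proposition \ref{prop:f-Ckl-then-Deta-Dgamma-eq-Dgamma-Deta-top-gr}) and then use that convergence in an initial topology is tested through the defining maps $d^{(j)}$. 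That is a genuine gain in rigor at exactly the point you identified as the main obstacle; everything else coincides with the paper's argument, so no changes are needed.
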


\begin{proof}
$(a)$ We will consider the following cases:
\vspace{1mm}

\textit{The case $k=l=0:$} This case is covered by the classical Exponential Law \ref{prop:classical-exp-law}.
\vspace{1mm}

\textit{The case $k=0$, $l\geq1:$} Let $x\in U$; the map $f^\vee(x)=f(x,\bullet)$ is obviously continuous, and for $y\in V$,
$\eta\in\mathfrak{L}(H)$ and $t\neq0$ small enough we have

\begin{align*}
	\frac{1}{t}(f^\vee(x)(y\cdot\eta(t))-f^\vee(x)(y))
	=\frac{1}{t}(f(x,y\cdot\eta(t))-f(x,y))
	\to D_{(0,\eta)}f(x,y)
\end{align*}

as $t\to0$. Thus the derivative $D_\eta(f^\vee(x))(y)$ exists and equals $D_{(0,\eta)}f(x,y)=(D_{(0,\eta)}f)^\vee(x)(y)$. 
Proceeding similarly,
for each $j\in\Natural$ with $j\leq l$ and $\eta_1,\ldots\eta_j\in\mathfrak{L}(H)$, we obtain the derivatives

\begin{align}\label{eq:eta-der-of-f-check}
	\left(D_{\eta_j}\cdots D_{\eta_1}(f^\vee(x))\right)(y)=(D_{(0,\eta_j)}\cdots D_{(0,\eta_1)}f)^\vee(x)(y)
\end{align}

The obtained differentials
$d^{(j)}(f^\vee(x))=(d^{(0,j)}f)^\vee(x):V\times\mathfrak{L}(H)^j\to E$ are continuous,
therefore $f^\vee(x)$ is $C^l$.

Further, by the classical Exponential Law \ref{prop:classical-exp-law},
each of the maps

\begin{align*}
	f^\vee:&U\to C(V,E)_{c.o},\quad x\mapsto f^\vee(x),\\
	(d^{(0,j)}f)^\vee:&U\to C(V\times\mathfrak{L}(H)^j,E)_{c.o},\quad x\mapsto (d^{(0,j)}f)^\vee(x)
\end{align*}

is continuous, and we have $d^{(j)}\circ f^\vee = (d^{(0,j)}f)^\vee$ for all $j\in\Natural_0$ with $j\leq l$. Thus,
the continuity of $f^\vee$ follows from the fact that the topology on $C^l(V,E)$ is initial with respect to 
the maps $d^{(j)}$.
\vspace{1mm}

\textit{The case $k\geq1,l\geq0:$} By the preceding steps, the map $f^\vee(x)$ is $C^l$ for each $x\in U$
(with derivatives given in \eqref{eq:eta-der-of-f-check}). Now we show
by induction on $i\in\Natural$ with $i\leq k$ that

\begin{align}\label{eq:gamma-der-of-f-check}
	(D_{\gamma_i}\cdots D_{\gamma_1}(f^\vee))(x)
	=(D_{(\gamma_i,0)}\cdots D_{(\gamma_1,0)}f)^\vee(x)
\end{align}

for all $x\in U$ and $\gamma_1,\ldots,\gamma_i\in\mathfrak{L}(G)$.
\vspace{1mm}

\textit{Induction start:} Since $f$ is $C^{1,0}$,
by Lemma \ref{lem:f-C10-iff-f[1-0]-ex-and-cont-top-gr-top-space}
the map $f^{[1,0]}:U^{[1]}\times V\to E$ is continuous, hence so is the map $(f^{[1,0]})^\vee:U^{[1]}\to C(V,E)_{c.o}$
(see Proposition \ref{prop:classical-exp-law}).
Let $(x,\gamma,t)\in U^{[1]}$ such that $t\neq 0$ and let $y\in V$, then we have

\begin{align*}
	\frac{1}{t}(f^\vee(x\cdot\gamma(t))(y)-f^\vee(x)(y))
	&=\frac{1}{t}(f(x\cdot\gamma(t),y)-f(x,y))\\
	&= f^{[1,0]}(x,\gamma,t,y)
	=(f^{[1,0]})^\vee(x,\gamma,t)(y).
\end{align*}

Therefore

\begin{align*}
	\frac{1}{t}(f^\vee(x\cdot\gamma(t))-f^\vee(x))
	&= (f^{[1,0]})^\vee(x,\gamma,t)\\
	&\to (f^{[1,0]})^\vee(x,\gamma,0)=(D_{(\gamma,0)}f)^\vee(x)
\end{align*}

as $t\to 0$. Thus, $D_\gamma(f^\vee)(x)$ exists and is given by $(D_{(\gamma,0)}f)^\vee(x)$.
\vspace{1mm}

\textit{Induction step:} Now, let $2\leq i\leq k$, $x\in U$ and $\gamma_1,\ldots,\gamma_i\in\mathfrak{L}(G)$.
For $t\neq0$ small enough we have

\begin{align*}
	&\frac{1}{t}\left((D_{\gamma_{i-1}}\cdots D_{\gamma_1}(f^\vee))(x\cdot\gamma_i(t))
	-(D_{\gamma_{i-1}}\cdots D_{\gamma_1}(f^\vee))(x)\right)\\
	&=\frac{1}{t}\left((D_{(\gamma_{i-1},0)}\cdots D_{(\gamma_1,0)}f)^\vee(x\cdot\gamma_i(t))
	-(D_{(\gamma_{i-1},0)}\cdots D_{(\gamma_1,0)}f)^\vee(x)\right)
\end{align*}

by the induction hypothesis. But the map $D_{(\gamma_{i-1},0)}\cdots D_{(\gamma_1,0)}f:U\times V\to E$
is $C^{1,0}$ (see Remark \ref{rem:f-Ck-or-Ckl-then-relations-to-derivatives-top-gr}),
hence by the induction start
we have 

\begin{align*}
	&\frac{1}{t}\left((D_{(\gamma_{i-1},0)}\cdots D_{(\gamma_1,0)}f)^\vee(x\cdot\gamma_i(t))
	-(D_{(\gamma_{i-1},0)}\cdots D_{(\gamma_1,0)})f^\vee(x)\right)\\
	&\to D_{\gamma_i}((D_{(\gamma_{i-1},0)}\cdots D_{(\gamma_1,0)}f)^\vee)(x)
	=(D_{(\gamma_i,0)}\cdots D_{(\gamma_1,0)}f)^\vee(x),
\end{align*}

which shows that the derivative $(D_{\gamma_i}\cdots D_{\gamma_1}(f^\vee))(x)$ exists
and is given by  $(D_{(\gamma_i,0)}\cdots D_{(\gamma_1,0)}f)^\vee(x)$, thus \eqref{eq:gamma-der-of-f-check}
holds.

From Remark \ref{rem:f-Ckl-then-relations-to-derivatives-with-schwarz-top-gr},
we know that each of the maps

\begin{align*}
	D_{(\gamma_i,0)}\cdots D_{(\gamma_1,0)}f:U\times V\to E
\end{align*}

is $C^{0,l}$, hence
$(D_{(\gamma_i,0)}\cdots D_{(\gamma_1,0)}f)^\vee(x)\in C^l(V,E)$ for each $x\in U$.
Now, it remains to show that each of the maps

\begin{align*}
	d^{(i)}(f^\vee):U\times\mathfrak{L}(G)^i&\to C^l(V,E),\\
	(x,\gamma_1,\ldots,\gamma_i)&\mapsto (D_{\gamma_i}\cdots D_{\gamma_1}(f^\vee))(x)
	=(D_{(\gamma_i,0)}\cdots D_{(\gamma_1,0)}f)^\vee(x)
\end{align*}

is continuous. To this end, let $y\in V$, $j\in\Natural_0$ with $j\leq l$ and $\eta_1,\ldots,\eta_j\in\mathfrak{L}(H)$.
Then we have

\begin{align*}
	&(d^{(j)}\circ d^{(i)}(f^\vee))(x,\gamma_1,\ldots,\gamma_i)(y,\eta_1,\ldots,\eta_j)\\
	&=d^{(j)}(d^{(i)}(f^\vee)(x,\gamma_1,\ldots,\gamma_i))(y,\eta_1,\ldots,\eta_j)\\
	&=\left[D_{\eta_j}\cdots D_{\eta_1}[(D_{\gamma_i}\cdots D_{\gamma_1}(f^\vee))(x)]\right](y)
\end{align*}

Using \eqref{eq:gamma-der-of-f-check} and \eqref{eq:eta-der-of-f-check} in turn we obtain

\begin{align*}
	&\left[D_{\eta_j}\cdots D_{\eta_1}[(D_{\gamma_i}\cdots D_{\gamma_1}(f^\vee))(x)]\right](y)\\
	&=\left[D_{\eta_j}\cdots D_{\eta_1}[(D_{(\gamma_i,0)}\cdots D_{(\gamma_1,0)}f)^\vee(x)]\right](y)\\
	&=(D_{(0,\eta_j)}\cdots D_{(0,\eta_1)}D_{(\gamma_i,0)}\cdots D_{(\gamma_1,0)}f)^\vee(x)(y).
\end{align*}

Finally, from Proposition \ref{prop:f-Ckl-then-Deta-Dgamma-eq-Dgamma-Deta-top-gr}
we conclude

\begin{align*}
	&(D_{(0,\eta_j)}\cdots D_{(0,\eta_1)}D_{(\gamma_i,0)}\cdots D_{(\gamma_1,0)}f)^\vee(x)(y)\\
	&=(D_{(\gamma_i,0)}\cdots D_{(\gamma_1,0)}D_{(0,\eta_j)}\cdots D_{(0,\eta_1)}f)^\vee(x)(y)\\
	&=d^{(i,j)}f(x,y,\gamma_1,\ldots,\gamma_i,\eta_1,\ldots,\eta_j)\\
	&=(d^{(i,j)}f\circ \rho_{i,j})(x,\gamma_1,\ldots,\gamma_i,y,\eta_1,\ldots,\eta_j)\\
	&=(d^{(i,j)}f\circ \rho_{i,j})^\vee(x,\gamma_1,\ldots,\gamma_i)(y,\eta_1,\ldots,\eta_j),
\end{align*}

where each $\rho_{i,j}$ is the continuous map

\begin{align*}
	\rho_{i,j}:U\times\mathfrak{L}(G)^i\times V\times\mathfrak{L}(H)^j
	&\to U\times V\times\mathfrak{L}(G)^i\times\mathfrak{L}(H)^j,\\
	(x,\gamma,y,\eta)&\mapsto (x,y,\gamma,\eta).
\end{align*}

Now, from the classical Exponential Law \ref{prop:classical-exp-law}
follows that the maps

\begin{align*}
	(d^{(i,j)}f\circ \rho_{i,j})^\vee:U\times\mathfrak{L}(G)^i\to C(V\times\mathfrak{L}(H)^j,E)_{c.o}
\end{align*}

are continuous, and we have shown that 

\begin{align}\label{eq:dj-circ-di-f-check-eq-dij-circ-rhoij-check}
	d^{(j)}\circ d^{(i)}(f^\vee)=(d^{(i,j)}f\circ \rho_{i,j})^\vee,
\end{align}

thus the continuity of
$d^{(i)}(f^\vee)$ follows from the fact that the topology on $C^l(V,E)$ is initial with respect to the maps
$d^{(j)}$, whence $f^\vee$ is $C^k$.
\vspace{1mm}

$(b)$
The linearity and injectivity of $\Phi$ is clear. To show that $\Phi$ is a topological embedding we will prove
that the given topology on $C^{k,l}(U\times V,E)$ is initial with respect to $\Phi$.
We define the functions

\begin{align*}
	\rho_{i,j}^*:
	C(U\times V\times\mathfrak{L}(G)^i\times\mathfrak{L}(H)^j,E)_{c.o}&\to 
	C(U\times\mathfrak{L}(G)^i\times V\times\mathfrak{L}(H)^j,E)_{c.o},\\
	g&\mapsto g\circ\rho_{i,j},
\end{align*}

and

\begin{align*}
	\Psi_{i,j}:C(U\times\mathfrak{L}(G)^i\times V\times\mathfrak{L}(H)^j,E)_{c.o}&\to
	C(U\times\mathfrak{L}(G)^i,C(V\times\mathfrak{L}(H)^j,E)_{c.o})_{c.o},\\
	g&\mapsto g^\vee
\end{align*}

for $i,j\in\Natural_0$ such that $i\leq k$, $j\leq l$. Then we have

\begin{align*}
	(d^{(i,j)}f\circ \rho_{i,j})^\vee=(\Psi_{i,j}\circ\rho_{i,j}^*\circ d^{(i,j)})(f).
\end{align*}

On the other hand, we have

\begin{align*}
	d^{(j)}\circ d^{(i)}(f^\vee)=(C(U\times\mathfrak{L}(G)^i,d^{(j)})\circ d^{(i)}\circ\Phi)(f),
\end{align*}

where $(C(U\times\mathfrak{L}(G)^i,d^{(j)})$ are the maps

\begin{align*}
	C(U\times\mathfrak{L}(G)^i,C^l(V,E))_{c.o}&\to
	C(U\times\mathfrak{L}(G)^i,C(V\times\mathfrak{L}(H)^j,E)_{c.o})_{c.o},\\
	g&\mapsto d^{(j)}\circ g.
\end{align*}

Thus, from \eqref{eq:dj-circ-di-f-check-eq-dij-circ-rhoij-check} follows the equality

\begin{align*}
	C(U\times\mathfrak{L}(G)^i,d^{(j)})\circ d^{(i)}\circ\Phi=\Psi_{i,j}\circ\rho_{i,j}^*\circ d^{(i,j)}.
\end{align*}

The maps $d^{(i,j)}$, $\rho_{i,j}^*$ and $\Psi_{i,j}$ are topological embeddings (see definition of the
topology on $C^{k,l}(U\times V,E)$, \cite[Appendix A.5]{GlH-NeebK-Hprep},
and Proposition \ref{prop:classical-exp-law},
respectively), hence by the transitivity of initial topologies \cite[Appendix A.2]{GlH-NeebK-Hprep}
the given topology on $C^{k,l}(U\times V,E)$ is initial with respect to the maps $\Psi_{i,j}\circ\rho_{i,j}^*\circ d^{(i,j)}$.
But by the above equality, this topology is also initial with respect to the maps 
$C(U\times\mathfrak{L}(G)^i,d^{(j)})\circ d^{(i)}\circ\Phi$. Since $d^{(i)}$ and 
$C(U\times\mathfrak{L}(G)^i,d^{(j)})$ are topological embeddings (see definition of the topology
on $C^l(V,E)$ and \cite[Appendix A.5]{GlH-NeebK-Hprep},
respectively) we conclude from \cite[Appendix A.2]{GlH-NeebK-Hprep} that the topology on the
space $C^{k,l}(U\times V,E)$
is initial with respect to $\Phi$. This completes the proof.
\end{proof}

\hspace{4mm}
Now, we go over to the proof of Theorem (B):

\begin{proof}[Proof of Theorem (B)]
We need to show that if $g\in C^k(U,C^l(V,E))$, then the map

\begin{align*}
	g^\wedge:U\times V\to E,\quad g^\wedge(x,y):=g(x)(y)
\end{align*}

(which is continuous, since the locally convex space $E$ is completely regular and we assumed that $U\times V$ is a $k_\Real$-space, see Proposition
\ref{prop:classical-exp-law}).
is $C^{k,l}$. Since $\Phi(g^\wedge)=(g^\wedge)^\vee=g$, the map $\Phi$ will be surjective, hence
a homeomorphism (being a topological embedding by Theorem \ref{thm:exp-law-Ckl-top-emb-top-gr}).
\vspace{1mm}

To this end, we fix $x\in U$, then $g(x)\in C^l(U,E)$ and for $y\in V$, $\eta\in\mathfrak{L}(H)$ and $t\neq0$ small enough
we have

\begin{align*}
	\frac{1}{t}(g^\wedge(x,y\cdot\eta(t))-g^\wedge(x,y))
	=\frac{1}{t}(g(x)(y\cdot\eta(t))-g(x)(y))
	\to d(g(x))(y,\eta)
\end{align*}

as $t\to 0$. Consequently $d^{(0,1)}(g^\wedge)(x,y,\eta)$ exists and equals
$d(g(x))(y,\eta)=(d^{(1)}\circ g)(x)(y,\eta)=(d^{(1)}\circ g)^\wedge(x,y,\eta)$. 
Analogously, for $j\in\Natural_0$ with $j\leq l$ and $\eta_1,\ldots,\eta_j\in\mathfrak{L}(H)$ we obtain the derivatives

\begin{align*}
	d^{(0,j)}(g^\wedge)(x,y,\eta_1,\ldots,\eta_j)
	=(d^{(j)}\circ g)^\wedge(x,y,\eta_1,\ldots,\eta_j).
\end{align*}

But for fixed $(y,\eta_1,\ldots,\eta_j)$ we have

\begin{align*}
	(d^{(j)}\circ g)^\wedge(x,y,\eta_1,\ldots,\eta_j)
	&=(d^{(j)}\circ g)(x)(y,\eta_1,\ldots,\eta_j)\\
	&=(\ev_{(y,\eta_1,\ldots,\eta_j)}\circ\; d^{(j)}\circ g)(x),
\end{align*}

where $\ev_{(y,\eta_1,\ldots,\eta_j)}$ is the continuous linear map

\begin{align*}
	\ev_{(y,\eta_1,\ldots,\eta_j)}:C(V\times\mathfrak{L}(H)^j,E)_{c.o}\to E,\quad
	h\mapsto h(y,\eta_1,\ldots,\eta_j).
\end{align*}

Since also $d^{(j)}:C^l(V,E)\to C(V\times\mathfrak{L}(H)^j,E)_{c.o}$ is continuous and linear, 
the composition $\ev_{(y,\eta_1,\ldots,\eta_j)}\circ\; d^{(j)}\circ g:U\to E$
is $C^k$, by Lemma \ref{lem:f-Ck-or-Ckl-then-cont-lin-comp-f-Ck-or-Ckl-top-gr}.
Thus for $\gamma\in\mathfrak{L}(G)$
and $t\neq0$ small enough we obtain

\begin{align*}
	&\frac{1}{t}(d^{(0,j)}(g^\wedge)(x\cdot\gamma(t),y,\eta_1,\ldots,\eta_j)-d^{(0,j)}(g^\wedge)(x,y,\eta_1,\ldots,\eta_j))\\
	&=\frac{1}{t}((\ev_{(y,\eta_1,\ldots,\eta_j)}\circ\; d^{(j)}\circ g)(x\cdot\gamma(t))-(\ev_{(y,\eta_1,\ldots,\eta_j)}\circ\; d^{(j)}\circ g)(x))\\
	&\to d(\ev_{(y,\eta_1,\ldots,\eta_j)}\circ\; d^{(j)}\circ g)(x,\gamma),
\end{align*}

as $t\to0$. Thus $d^{(1,j)}(g^\wedge)(x,y,\gamma,\eta_1,\ldots,\eta_j)$ is given by

\begin{align*}
	d(\ev_{(y,\eta_1,\ldots,\eta_j)}\circ\; d^{(j)}\circ g)(x,\gamma)
	&=(\ev_{(y,\eta_1,\ldots,\eta_j)}\circ\; d^{(j)}\circ dg)(x,\gamma)\\
	&=(d^{(j)}\circ dg)(x,\gamma)(y,\eta_1,\ldots,\eta_j)\\
	&=(d^{(j)}\circ dg)^\wedge(x,\gamma,y,\eta_1,\ldots,\eta_j).
\end{align*}

Analogously, for each $i\in\Natural_0$ with $i\leq k$ and $\gamma_1,\ldots,\gamma_i\in\mathfrak{L}(G)$ we obtain

\begin{align*}
	d^{(i,j)}(g^\wedge)(x,y,\gamma_1,\ldots,\gamma_i,\eta_1,\ldots,\eta_j)
	=(d^{(j)}\circ d^{(i)}g)^\wedge(x,\gamma_1,\ldots,\gamma_i,y,\eta_1,\ldots,\eta_j).
\end{align*}

To see that $g^\wedge$ is $C^{k,l}$ we need to show that the maps

\begin{align}\label{eq:der-i-j-g-wedge}
	d^{(i,j)}(g^\wedge):U\times V\times\mathfrak{L}(G)^i\times\mathfrak{L}(H)^j&\to E,\\
	(x,y,\gamma_1,\ldots,\gamma_i,\eta_1,\ldots,\eta_j)&\mapsto (d^{(j)}\circ d^{(i)}g)^\wedge(x,\gamma_1,\ldots,\gamma_i,y,\eta_1,\ldots,\eta_j)\nonumber
\end{align}

are continuous for all $i,j\in\Natural_0$ with $i\leq k$, $j\leq l$. To this end, consider the continuous maps 

\begin{align*}
	d^{(j)}\circ d^{(i)}g:U\times\mathfrak{L}(G)^i\to C(V\times\mathfrak{L}(H)^j,E)_{c.o}.
\end{align*}

By Proposition \ref{prop:classical-exp-law},
the maps $(d^{(j)}\circ d^{(i)}g)^\wedge:U\times\mathfrak{L}(G)^i\times V\times\mathfrak{L}(H)^j\to E$
are continuous, since $E$ is completely regular and we assumed that $U\times V\times\mathfrak{L}(G)^i\times\mathfrak{L}(H)^j$ is a 
$k_\Real$-space, hence the maps $d^{(i,j)}(g^\wedge)$ are continuous and $g^\wedge$ is $C^{k,l}$.
\end{proof}

\begin{remark}
Theorem (A) follows from Theorem (B), since $C^{\infty,\infty}(U\times V,E)\cong C^\infty(U\times V,E)$ as a topological
vector space, by Corollary \ref{cor:f-C-infty-infty-iff-f-C-infty-top-gr}.
\end{remark}

\begin{corollary}\label{cor:exp-law-holds-if-metriz-or-lcp-top-gr}
Let $U\subseteq G$, $V\subseteq H$ be open subsets of topological groups $G$ and $H$, let $E$ be
a locally convex space and $k,l\in\Natzeroinfty$. Assume that at least one of the following conditions is satisfied:
\vspace{3mm}

(a) $l=0$ and $V$ is locally compact,
\vspace{3mm}

(b) $k,l<\infty$ and $U\times V\times\mathfrak{L}(G)^k\times\mathfrak{L}(H)^l$ is a $k_\Real$-space,
\vspace{3mm}

(c) $G$ and $H$ are metrizable,
\vspace{3mm}

(d) $G$ and $H$ are locally compact.
\vspace{3mm}

Then the map

\begin{align*}
	\Phi:C^{k,l}(U\times V,E)\to C^k(U,C^l(V,E)),\quad f\mapsto f^\vee
\end{align*}

is a homeomorphism.
\end{corollary}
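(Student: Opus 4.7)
The plan is to deduce each of the conditions (a)--(d) from Theorem~\ref{thm:exp-law-Ckl-top-emb-top-gr}. For (b), (c) and (d), I would verify the $k_\Real$-space hypothesis of Theorem~(B) directly; the exceptional case (a) does not supply this hypothesis and is instead handled by re-entering the proof of Theorem~(B) and applying a different branch of the classical exponential law.

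For case (c), since $\Real$ is hemicompact and $G$, $H$ are metrizable, the compact-open function spaces $C(\Real, G)$ and $C(\Real, H)$ are metrizable; hence so are their subspaces $\mathfrak{L}(G)$ and $\mathfrak{L}(H)$, as well as the open subsets $U\subseteq G$ and $V\subseteq H$. Each finite product $U\times V\times\mathfrak{L}(G)^i\times\mathfrak{L}(H)^j$ is then metrizable, hence a $k$-space and a $k_\Real$-space. For case (d), $U$ and $V$ are locally compact as open subsets of locally compact groups; invoking results from \cite[Appendix~A.5]{GlH-NeebK-Hprep}, $\mathfrak{L}(G)$ and $\mathfrak{L}(H)$ are likewise locally compact, so the finite products are locally compact, hence $k_\Real$. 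In either case Theorem~(B) applies directly.

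For case (b), only the top product with $i=k$, $j=l$ is assumed $k_\Real$; but each lower product $U\times V\times\mathfrak{L}(G)^i\times\mathfrak{L}(H)^j$ (with $i\leq k$, $j\leq l$) is a retract of it, via the embedding filling in the missing coordinates with the constant one-parameter subgroups $\varepsilon_G,\varepsilon_H$ used in Proposition~\ref{prop:f-Ck+l-then-f-Ckl-top-gr}, together with the projection forgetting them. A short diagram chase shows that a retract of a $k_\Real$-space is again a $k_\Real$-space: given $f\colon Y\to\Real$ with $f|_K$ continuous for every compact $K\subseteq Y$, one checks that $f\circ r\colon X\to\Real$ is continuous on every compact subset of $X$, so continuous, whence $f=f\circ r\circ i$ is continuous. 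Consequently every hypothesis of Theorem~(B) is satisfied.

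The only case not reducible to Theorem~(B) is (a), since with merely $V$ locally compact nothing forces $U\times V\times\mathfrak{L}(G)^i$ to be $k_\Real$. Here I would re-enter the proof of Theorem~(B): the $k_\Real$-hypothesis is used there solely to apply the classical exponential law (Proposition~\ref{prop:classical-exp-law}) in order to pass from continuity of $d^{(j)}\circ d^{(i)}g\colon U\times\mathfrak{L}(G)^i\to C(V\times\mathfrak{L}(H)^j,E)_{c.o}$ to continuity of $(d^{(j)}\circ d^{(i)}g)^\wedge$. When $l=0$, we have $j=0$ and the second factor is just $V$, which is locally compact; the ``second factor locally compact'' branch of Proposition~\ref{prop:classical-exp-law} then yields the required continuity with no $k_\Real$-assumption, and the remainder of the proof of Theorem~(B) is unchanged. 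The main technical input in the whole argument is the local compactness of $\mathfrak{L}(G)$ needed for case (d), which I would import from \cite[Appendix~A.5]{GlH-NeebK-Hprep} rather than prove from scratch.
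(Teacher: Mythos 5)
Your treatment of cases (a), (b) and (c) is correct and essentially parallels the paper's: (c) via metrizability of $C(\Real,G)$, and (a) by re-entering the proof of Theorem~(B) and using the ``second factor locally compact'' branch of Proposition~\ref{prop:classical-exp-law} (which is indeed the only place the $k_\Real$-hypothesis enters, since the continuity of $g^\wedge$ itself is the case $i=j=0$). For (b) you replace the paper's citation of Hu\v{s}ek by the observation that each lower product is a retract of the top one via the constant one-parameter subgroups, together with the fact that a retract (section--retraction pair) of a $k_\Real$-space is $k_\Real$; this argument is correct and is a pleasant, self-contained alternative to the reference.

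Case (d), however, contains a genuine error: $\mathfrak{L}(G)$ is \emph{not} locally compact for a general locally compact group $G$, and no such statement is available to import. Already for $G=\mathds{T}^{\Natural}$ (a compact group) one has $\mathfrak{L}(G)\cong\Real^{\Natural}$, which is not locally compact; in general the paper shows, via the Hofmann--Morris theory of pro-Lie groups, only that $\mathfrak{L}(G)\cong\Real^{I}$ for some (possibly infinite) index set $I$. Consequently the products $U\times V\times\mathfrak{L}(G)^i\times\mathfrak{L}(H)^j$ are not locally compact, and since products of $k$-spaces (or $k_\Real$-spaces) need not be $k_\Real$, you cannot conclude directly. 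The missing ingredient is Noble's theorem \cite[Theorem 5.6 (ii)]{NobleN1970ContOnCartProd}: an arbitrary product of completely regular locally compact spaces is a $k_\Real$-space. Writing $U\times V\times(\Real^{I})^i\times(\Real^{J})^j$ as such a product (of $U$, $V$ and copies of $\Real$) then yields the $k_\Real$-property needed to apply Theorem~(B). Without the pro-Lie identification $\mathfrak{L}(G)\cong\Real^{I}$ and Noble's theorem, case (d) does not go through.
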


\begin{proof}
$(a)$ As in the proof of Theorem (B), we need to show that if $g\in C^k(U,C(V,E))$, then 
$g^\wedge\in C^{k,0}(U\times V,E)$. The computations of the derivatives of $g^\wedge$ carry over (with $j=0$),
hence it remains to show that the maps $d^{(i,0)}(g^\wedge)$ in \eqref{eq:der-i-j-g-wedge}
are continuous for all $i\in\Natural_0$ with $i\leq k$. But since $V$ is assumed locally compact, each of the
maps $(d^{(0)}\circ d^{(i)}g)^\wedge:U\times\mathfrak{L}(G)^i\times V\to E$ is continuous by
Proposition \ref{prop:classical-exp-law},
hence so is each of the maps $d^{(i,0)}(g^\wedge)$, as required.
\vspace{1mm}

$(b)$ By \cite[Proposition, p.62]{HusekM1971ProdQuotAndk'Spaces},
if $U\times V\times\mathfrak{L}(G)^k\times\mathfrak{L}(H)^l$
is a $k_\Real$-space, then so is $U\times V\times\mathfrak{L}(G)^i\times\mathfrak{L}(H)^j$ for each $i,j\in\Natural_0$
with $i\leq k$, $j\leq l$. Hence, Theorem (B) holds and $\Phi$ is a homeomorphism.
\vspace{1mm}

$(c)$ Since $G$ is metrizable, the space $C(\Real,G)$ is metrizable (see \cite[Appendix A.5]{GlH-NeebK-Hprep}
or
\cite[Lemma B.21]{GlH2013ExpLawsUltra}), 
whence
so is $\mathfrak{L}(G)\subseteq C(\Real,G)$ as well as $U\times\mathfrak{L}(G)^i$ for each $i\in\Natural_0$, $i\leq k$
as a finite product of metrizable spaces. With a similar argumentation we conclude that also $V\times\mathfrak{L}(H)^j$
is metrizable for each $j\in\Natural_0$ with $j\leq l$, whence so is $U\times V\times\mathfrak{L}(G)^i\times\mathfrak{L}(H)^j$.
But each metrizable space is a $k$-space, hence a $k_\Real$-space. Therefore, Theorem (B)
holds in this case and $\Phi$ is a homeomorphism.

\vspace{1mm}
$(d)$ As $G$ is locally compact, it is known that the identity component $G_0$ of $G$ 
(being a connected locally compact subgroup of $G$)
is a pro-Lie group (in the sense that $G_0$ is complete and every identity neighborhood of $G_0$ contains a
normal subgroup $N$ such that $G/N$ is a Lie group, see 
\cite[Definition 3.25]{HofmannKH-MorrisSA2007LieThConnProLieGr}).
Hence, by 
\cite[Theorem 3.12]{HofmannKH-MorrisSA2007LieThConnProLieGr},
$\mathfrak{L}(G)$ is a pro-Lie algebra,
and from \cite[Proposition 3.7]{HofmannKH-MorrisSA2007LieThConnProLieGr}
follows that $\mathfrak{L}(G)\cong\Real^I$
for some set $I$ as a topological vector space. Since also $H$ is assumed locally compact, for each $i,j\in\Natural_0$
with $i\leq k$, $j\leq l$ we have $U\times V\times\mathfrak{L}(G)^i\times\mathfrak{L}(H)^j\cong U\times V\times (\Real^I)^i
\times(\Real^J)^j$ for some set $J$. Now, from \cite[Theorem 5.6 $(ii)$]{NobleN1970ContOnCartProd}
follows that
$U\times V\times\mathfrak{L}(G)^i\times\mathfrak{L}(H)^j$ is a $k_\Real$-space (being isomorphic to a
product of completely regular locally compact spaces), whence Theorem (B) holds
and $\Phi$ is a homeomorphism.
\end{proof}

\appendix
\section{Some properties of $C^k$- and $C^{k,l}$-functions on topological groups}

\hspace{4mm}
First, we prove a simple chain rule for compositions of continuous group homomorphisms
and $C^k$-functions:

\begin{lemma}\label{lem:f-Ck-then-f-comp-cont-hom-Ck-top-gr}
Let $G$ and $H$ be topological groups, $E$ be a locally convex space. Let $\phi:G\to H$ be a continuous
group homomorphism and $f:V\to E$ be a $C^k$-map ($k\in\Natinfty$) on an open subset $V\subseteq H$.
Then for $U:=\phi^{-1}(V)$ the map

\begin{align*}
	f\circ\phi\big|_U:U\to E,\quad x\mapsto f(\phi(x))
\end{align*}

is $C^k$.
\end{lemma}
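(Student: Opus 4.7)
The plan is to prove the chain rule by induction on the order $i$ of differentiation, exploiting the homomorphism property $\phi(x\cdot\gamma(t))=\phi(x)\cdot(\phi\circ\gamma)(t)$ together with the fact (from Remark \ref{rem:comp-and-prod-one-par-subgr-top-gr}) that $\phi\circ\gamma\in\mathfrak{L}(H)$ whenever $\gamma\in\mathfrak{L}(G)$, and that $\mathfrak{L}(\phi):\mathfrak{L}(G)\to\mathfrak{L}(H)$ is continuous. Continuity of $f\circ\phi\big|_U$ is immediate from continuity of $\phi$ and $f$.

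For the induction start, fix $x\in U$ and $\gamma\in\mathfrak{L}(G)$. For $t\neq 0$ small enough we have
\begin{align*}
\frac{1}{t}\bigl((f\circ\phi)(x\cdot\gamma(t))-(f\circ\phi)(x)\bigr)
&=\frac{1}{t}\bigl(f(\phi(x)\cdot(\phi\circ\gamma)(t))-f(\phi(x))\bigr)\\
&\to D_{\phi\circ\gamma}f(\phi(x))=df(\phi(x),\phi\circ\gamma)
\end{align*}
as $t\to 0$, because $\phi\circ\gamma\in\mathfrak{L}(H)$ and $f$ is $C^1$. Hence $d(f\circ\phi)(x,\gamma)$ exists and equals $df(\phi(x),\mathfrak{L}(\phi)(\gamma))$. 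Iterating this computation (or arguing by induction), I would show that for every $i\in\Natural$ with $i\leq k$ and $\gamma_1,\ldots,\gamma_i\in\mathfrak{L}(G)$,
\begin{align*}
d^{(i)}(f\circ\phi)(x,\gamma_1,\ldots,\gamma_i)=d^{(i)}f\bigl(\phi(x),\phi\circ\gamma_1,\ldots,\phi\circ\gamma_i\bigr).
\end{align*}
The inductive step follows verbatim from the base case applied to the $C^{k-i+1}$-map $D_{\phi\circ\gamma_{i-1}}\cdots D_{\phi\circ\gamma_1}f$ at the point $\phi(x)$ in direction $\phi\circ\gamma_i$.

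Finally, to conclude that $f\circ\phi\big|_U$ is $C^k$ it suffices to observe that the above identity expresses $d^{(i)}(f\circ\phi)$ as the composition
\begin{align*}
U\times\mathfrak{L}(G)^i\xrightarrow{\;\phi\big|_U\times\mathfrak{L}(\phi)^i\;}V\times\mathfrak{L}(H)^i\xrightarrow{\;d^{(i)}f\;}E,
\end{align*}
which is continuous since $\phi\big|_U$ and $\mathfrak{L}(\phi)$ are continuous (Remark \ref{rem:comp-and-prod-one-par-subgr-top-gr}) and $d^{(i)}f$ is continuous by hypothesis. There is no real obstacle: the only minor point is to verify the identity $\phi(x\cdot\gamma(t))=\phi(x)\cdot(\phi\circ\gamma)(t)$, which is just the homomorphism property, and to keep track of the iteration in the induction step.
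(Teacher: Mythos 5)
Your argument is correct and follows essentially the same route as the paper's proof: the same base-case computation using $\phi(x\cdot\gamma(t))=\phi(x)\cdot(\phi\circ\gamma)(t)$, the same iteration to get $d^{(i)}(f\circ\phi\big|_U)=(d^{(i)}f)\circ(\phi\big|_U\times\mathfrak{L}(\phi)\times\cdots\times\mathfrak{L}(\phi))$, and the same appeal to the continuity of $\mathfrak{L}(\phi)$ from Remark \ref{rem:comp-and-prod-one-par-subgr-top-gr}.
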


\begin{proof}
Obviously, the map $f\circ\phi\big|_U$ is continuous. Now, let $x\in U$ and $\gamma\in\mathfrak{L}(G)$. For
$t\neq0$ small enough we have

\begin{align*}
	\frac{f(\phi(x\cdot\gamma(t)))-f(\phi(x))}{t}=\frac{f(\phi(x)\cdot\phi(\gamma(t)))-f(\phi(x))}{t}\to df(\phi(x),\phi\circ\gamma)
\end{align*}

as $t\to 0$, since $\phi\circ\gamma\in\mathfrak{L}(H)$, see Remark 
\ref{rem:comp-and-prod-one-par-subgr-top-gr}. Therefore 
$d(f\circ\phi\big|_U)(x,\gamma)$ exists and is given by $df(\phi(x),\phi\circ\gamma)$.

Repeating the above steps, we obtain for $i\in\Natural$ with $i\leq k$, $\gamma_1,\ldots,\gamma_i\in\mathfrak{L}(G)$
the derivatives $	d^{(i)}(f\circ\phi\big|_U)(x,\gamma_1,\ldots,\gamma_i)
=d^{(i)}f(\phi(x),\phi\circ\gamma_1,\ldots,\phi\circ\gamma_i)$.

Now, recall that the map $\mathfrak{L}(\phi):\mathfrak{L}(G)\to\mathfrak{L}(H),\eta\mapsto\phi\circ\eta$
is continuous (Remark \ref{rem:comp-and-prod-one-par-subgr-top-gr}), whence
also each of the maps

\begin{align*}
	d^{(i)}(f\circ\phi\big|_U)
	:=(d^{(i)}f)\circ(\phi\big|_U\times\underbrace{\mathfrak{L}(\phi)\times\cdots\times\mathfrak{L}(\phi)}_{\mbox{$i$-times}}):
	U\times\mathfrak{L}(G)^i\to E
\end{align*}

is continuous. Hence $f\circ\phi\big|_U$ is $C^k$.
\end{proof}

\begin{lemma}\label{lem:Ck-and-Ckl-maps-into-prod-top-gr}
Let $U\subseteq G$, $V\subseteq H$ be open subsets of topological groups $G$ and $H$, let
$(E_{\alpha})_{\alpha\in A}$ be a family of locally convex spaces with direct product $E:=\prod_{\alpha\in A}E_{\alpha}$
and the coordinate projections $\pr_{\alpha}:E\to E_{\alpha}$. For $k,l\in\Natzeroinfty$ the
following holds:
\vspace{3mm}

(a) A map $f:U\to E$ is $C^k$ if and only if all of its components $f_{\alpha}:=\pr_{\alpha}\circ f$ are $C^{k}$.
\vspace{3mm}

(b) A map $f:U\times V\to E$ is $C^{k,l}$ if and only if all of its components $f_{\alpha}:=\pr_{\alpha}\circ f$ are $C^{k,l}$.
\end{lemma}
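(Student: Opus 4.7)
The plan is to reduce both assertions to two elementary facts about the product locally convex space $E=\prod_{\alpha\in A}E_{\alpha}$: every projection $\pr_{\alpha}:E\to E_{\alpha}$ is continuous and linear, and the topology on $E$ is initial with respect to the family $(\pr_{\alpha})_{\alpha\in A}$, so that both continuity and convergence in $E$ are componentwise.

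For the forward implications in both (a) and (b), I would simply apply Lemma \ref{lem:f-Ck-or-Ckl-then-cont-lin-comp-f-Ck-or-Ckl-top-gr} with $\lambda:=\pr_{\alpha}$: if $f$ is $C^{k}$ (resp.\ $C^{k,l}$), then $f_{\alpha}=\pr_{\alpha}\circ f$ is $C^{k}$ (resp.\ $C^{k,l}$) for every $\alpha\in A$.

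For the converse in (a), assume each $f_{\alpha}$ is $C^{k}$. Continuity of $f:U\to E$ is immediate from the universal property of the product topology. I would then proceed by induction on $i\leq k$, establishing simultaneously the existence of $d^{(i)}f$ and the identity $\pr_{\alpha}\circ d^{(i)}f=d^{(i)}f_{\alpha}$ for every $\alpha$. The inductive step amounts to observing that for $x\in U$ and $\gamma_{1},\ldots,\gamma_{i}\in\mathfrak{L}(G)$ the difference quotient
\begin{align*}
    \frac{1}{t}\Bigl(d^{(i-1)}f(x\cdot\gamma_{i}(t),\gamma_{1},\ldots,\gamma_{i-1})-d^{(i-1)}f(x,\gamma_{1},\ldots,\gamma_{i-1})\Bigr)
\end{align*}
has $\alpha$-th component equal to the analogous difference quotient for $f_{\alpha}$, which converges to $d^{(i)}f_{\alpha}(x,\gamma_{1},\ldots,\gamma_{i})$ by hypothesis; since convergence in $E$ is componentwise, the full difference quotient converges to the element of $E$ whose coordinates are these limits. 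Continuity of $d^{(i)}f:U\times\mathfrak{L}(G)^{i}\to E$ is then again immediate from the universal property, since every $\pr_{\alpha}\circ d^{(i)}f=d^{(i)}f_{\alpha}$ is continuous by hypothesis. Hence $f$ is $C^{k}$. The converse in (b) is proved by the identical argument applied to the bi-indexed iterated derivatives $d^{(i,j)}f$, using the chain of partial operators $D_{(\gamma_{r},0)}$ and $D_{(0,\eta_{s})}$ from Definition \ref{def:Ckl-map-top-gr} and inducting on $i$ and $j$ separately.

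I do not foresee any genuine obstacle here: the whole argument is bookkeeping around the two facts recalled at the outset. The only point that needs to be stated cleanly is the componentwise identity $\pr_{\alpha}\circ d^{(i)}f=d^{(i)}f_{\alpha}$ (respectively $\pr_{\alpha}\circ d^{(i,j)}f=d^{(i,j)}f_{\alpha}$), which encapsulates both the commutation of $\pr_{\alpha}$ with the limit defining each new derivative and the compatibility with the inductive hypothesis.
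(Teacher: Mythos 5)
Your proposal is correct and follows essentially the same route as the paper: the forward direction via Lemma \ref{lem:f-Ck-or-Ckl-then-cont-lin-comp-f-Ck-or-Ckl-top-gr} applied to the continuous linear projections, and the converse by observing that difference quotients and their limits in the product $E$ are computed componentwise, yielding $d^{(i)}f=(d^{(i)}f_{\alpha})_{\alpha\in A}$ (resp.\ $d^{(i,j)}f=(d^{(i,j)}f_{\alpha})_{\alpha\in A}$) with continuity coming from the initiality of the product topology.
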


\begin{proof}
To prove $(a)$, first recall that because each of the projections $\pr_{\alpha}$ is continuous and linear, 
the compositions $\pr_{\alpha}\circ f$ are $C^{k}$
if $f$ is $C^{k}$, by Lemma \ref{lem:f-Ck-or-Ckl-then-cont-lin-comp-f-Ck-or-Ckl-top-gr} $(a)$.
\vspace{1mm}

Conversely,  assume that each $f_{\alpha}$ is $C^{k}$ and let $x\in U$, $\gamma\in\mathfrak{L}(G)$
and $t\neq0$ small enough. Then we have

\begin{align*}
	\frac{1}{t}\left(f(x\cdot\gamma(t))-f(x)\right) = \left(\frac{1}{t}\left(f_{\alpha}(x\cdot\gamma(t))-f_{\alpha}(x)\right)\right)_{\alpha\in A}.
\end{align*}

Since $\frac{1}{t}\left(f_{\alpha}(x\cdot\gamma(t))-f_{\alpha}(x)\right)$ converges to $df_\alpha(x,\gamma)$ as $t\to0$
for each $\alpha\in A$, the derivative $df(x,\gamma)$ exists and is given by $\left(df_\alpha(x,\gamma)\right)_{\alpha\in A}$.

Repeating the above steps, we obtain for $i\in\Natural$ with $i\leq k$ and $\gamma_1,\ldots,\gamma_i\in\mathfrak{L}(G)$
the derivatives $d^{(i)}f(x,\gamma_1,\ldots,\gamma_i)=\left(d^{(i)}f_\alpha(x,\gamma_1,\ldots,\gamma_i)\right)_{\alpha\in A}$,
which define continuous maps

\begin{align*}
	d^{(i)}f=\left(d^{(i)}f_\alpha\right)_{\alpha\in A}:U\times\mathfrak{L}(G)^i \to E.
\end{align*}

Therefore, $f$ is $C^k$. 
\vspace{1mm}

The assertion $(b)$ can be proven similarly, by using Lemma 
\ref{lem:f-Ck-or-Ckl-then-cont-lin-comp-f-Ck-or-Ckl-top-gr} $(b)$ and showing that
for all $i,j\in\Natural_0$, with $i\leq k$, $j\leq l$ we have
$d^{(i,j)}f 
= \left(d^{(i,j)}f_{\alpha}\right)_{\alpha\in A}$.
\end{proof}

\hspace{4mm}
The following lemma is a special case of Lemma 
\ref{lem:f-C10-iff-f[1-0]-ex-and-cont-top-gr-top-space}:

\begin{lemma}\label{lem:f-C1-iff-f[1]-ex-and-cont-top-gr}
Let $U\subseteq G$ be an open subset of a topological group $G$, and $E$ be a
locally convex space. A continuous map $f:U\to E$ is $C^1$ if and only if there exists
a continuous map

\begin{align*}
	f^{[1]}:U^{[1]}\to E
\end{align*}

on the open set

\begin{align*}
	U^{[1]}:=\{(x,\gamma,t)\in U\times\mathfrak{L}(G)\times\Real : x\cdot\gamma(t)\in U\}
\end{align*}

such that

\begin{align*}
	f^{[1]}(x,\gamma,t)=\frac{1}{t}(f(x\cdot\gamma(t))-f(x))
\end{align*}

for each $(x,\gamma,t)\in U^{[1]}$ with $t\neq 0$.
\vspace{1mm}

In this case we have $df(x,\gamma)=f^{[1]}(x,\gamma,0)$ for all $x\in U$ and $\gamma\in\mathfrak{L}(G)$.
\end{lemma}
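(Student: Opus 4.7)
The plan is to deduce this lemma as an immediate special case of Lemma \ref{lem:f-C10-iff-f[1-0]-ex-and-cont-top-gr-top-space}, by collapsing the second variable to a point. Remark \ref{rem:f-Ck0-C0l-top-gr-top-space} makes this legitimate: when $l = 0$, the earlier lemma permits $V$ to be any Hausdorff topological space, and the present $C^1$-statement corresponds exactly to the $C^{1,0}$-situation there.

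Concretely, I would let $V := \{*\}$ be a singleton Hausdorff space and define $\tilde{f} : U \times V \to E$ by $\tilde{f}(x, *) := f(x)$. Under the canonical homeomorphisms $U \times V \cong U$ and $U^{[1]} \times V \cong U^{[1]}$, continuity of $\tilde f$ is equivalent to continuity of $f$; the difference quotient $\tilde{f}^{[1,0]}(x, \gamma, t, *)$ agrees with $f^{[1]}(x, \gamma, t)$ whenever $t \neq 0$; and $\tilde f$ is $C^{1,0}$ precisely when $f$ is $C^1$, with matching derivatives $d^{(1,0)}\tilde f(x, *, \gamma) = df(x,\gamma)$. Applying Lemma \ref{lem:f-C10-iff-f[1-0]-ex-and-cont-top-gr-top-space} to $\tilde f$ then yields both implications of the equivalence together with the formula $df(x,\gamma) = f^{[1]}(x,\gamma,0)$.

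There is essentially no obstacle to overcome here, since the substantive work, notably the invocation of the Fundamental Theorem of Calculus and the continuity of parameter-dependent integrals via \cite[Lemma 1.1.11]{GlH-NeebK-Hprep}, was already carried out in Lemma \ref{lem:f-C10-iff-f[1-0]-ex-and-cont-top-gr-top-space}. Should a self-contained proof be preferred, one could rerun that argument verbatim with every occurrence of the auxiliary $y$-variable deleted: the curve $h(t) := f(x \cdot \gamma(t))$ would be shown to be $C^1$ by the same computation of its difference quotient, and the representation $f^{[1]}(x, \gamma, t) = \int_0^1 df(x \cdot \gamma(tu), \gamma)\, du$ on a neighborhood of each $(x_0, \gamma_0, 0) \in U^{[1]}$ would establish continuity of $f^{[1]}$ at such a point. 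The only mildly delicate step in such a direct proof is verifying that $U^{[1]}$ is open in $U \times \mathfrak{L}(G) \times \Real$, which follows at once from continuity of the group multiplication and of the evaluation $\mathfrak{L}(G) \times \Real \to G$.
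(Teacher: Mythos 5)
Your proposal is correct and matches the paper exactly: the paper gives no separate proof of this lemma, stating only that it is a special case of Lemma \ref{lem:f-C10-iff-f[1-0]-ex-and-cont-top-gr-top-space}, which is precisely the reduction (via a singleton $V$, justified by Remark \ref{rem:f-Ck0-C0l-top-gr-top-space}) that you carry out. Your sketch of the direct argument likewise mirrors the proof of that earlier lemma with the $y$-variable removed.
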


\hspace{4mm}
We use this lemma, as well as the analogue for $C^1$-maps on locally convex spaces (which
can be found in \cite[Lemma 1.2.10]{GlH-NeebK-Hprep}),
for the proof of a chain rule for compositions of $C^k$-functions $f:G\to E$ and $g:E\to F$, which will
be provided after the following version:

\begin{lemma}\label{lem:f-Ck0-gr-sp-g-Ck-lcs-then-g-comp-f-Ck0-top-gr-top-space}
Let $G$ be a topological group, $P$ be a topological space and $E$, $F$ be locally convex spaces. Let $U\subseteq G$, $V\subseteq E$ be open
subsets, and $k\in\Natinfty$. If $f:U\times P\to E$ is a $C^{k,0}$-map such that $f(U\times P)\subseteq V$, and
$g:V\to F$ is a $C^k$-map (in the sense of differentiability on locally convex spaces), then 

\begin{align*}
	g\circ f:U\times P\to F
\end{align*}

is a $C^{k,0}$-map.
\end{lemma}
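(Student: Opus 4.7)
The plan is induction on $k \in \Natural$, with the case $k = \infty$ following immediately since a map is $C^{\infty, 0}$ if and only if it is $C^{k, 0}$ for every finite $k \geq 1$. Throughout I invoke the structural results on $C^{k, 0}$-maps in the situation where the second factor is an arbitrary Hausdorff topological space, as permitted by Remark \ref{rem:f-Ck0-C0l-top-gr-top-space}.

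For the base case $k = 1$, I apply Lemma \ref{lem:f-C10-iff-f[1-0]-ex-and-cont-top-gr-top-space} to extract the continuous map $f^{[1, 0]} : U^{[1]} \times P \to E$, together with the analogous characterization for $C^1$-maps on locally convex spaces \cite[Lemma 1.2.10]{GlH-NeebK-Hprep}, which produces a continuous $g^{[1]} : V^{[1]} \to F$ on $V^{[1]} := \{(y, w, t) \in V \times E \times \Real : y + t w \in V\}$ satisfying $g^{[1]}(y, w, t) = \frac{1}{t}(g(y + t w) - g(y))$ for $t \neq 0$ and $g^{[1]}(y, w, 0) = dg(y, w)$. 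Since $f(x \cdot \gamma(t), p) = f(x, p) + t \cdot f^{[1, 0]}(x, \gamma, t, p)$ for $t \neq 0$, the difference quotient for $g \circ f$ factors as
\begin{equation*}
\tfrac{1}{t} \bigl(g(f(x \cdot \gamma(t), p)) - g(f(x, p))\bigr) = g^{[1]}\bigl(f(x, p),\, f^{[1, 0]}(x, \gamma, t, p),\, t\bigr),
\end{equation*}
which extends continuously to $t = 0$ as a function on $U^{[1]} \times P$. The sufficiency direction of Lemma \ref{lem:f-C10-iff-f[1-0]-ex-and-cont-top-gr-top-space} then yields that $g \circ f$ is $C^{1, 0}$ with $d^{(1, 0)}(g \circ f)(x, p, \gamma) = dg(f(x, p),\, d^{(1, 0)} f(x, p, \gamma))$.

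For the inductive step with $k \geq 2$, Remark \ref{rem:f-Ck-or-Ckl-then-relations-to-derivatives-top-gr} reduces the claim to showing that $d^{(1, 0)}(g \circ f) : U \times (P \times \mathfrak{L}(G)) \to F$ is $C^{k - 1, 0}$. Introduce
\begin{equation*}
\tilde{f} : U \times (P \times \mathfrak{L}(G)) \to V \times E,\quad (x, p, \gamma) \mapsto \bigl(f(x, p),\, d^{(1, 0)} f(x, p, \gamma)\bigr),
\end{equation*}
so that $d^{(1, 0)}(g \circ f) = dg \circ \tilde{f}$. By Lemma \ref{lem:Ck-and-Ckl-maps-into-prod-top-gr} applied componentwise, $\tilde{f}$ is $C^{k - 1, 0}$: its first component is $f$ precomposed with the continuous projection onto $P$, hence $C^{k, 0}$, while its second component is $d^{(1, 0)} f$, which is $C^{k - 1, 0}$ by Remark \ref{rem:f-Ck-or-Ckl-then-relations-to-derivatives-top-gr}. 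Since $g$ is $C^k$, its differential $dg$ is $C^{k - 1}$ on the open subset $V \times E$ of the locally convex space $E \times E$, so the induction hypothesis, applied with parameter space $P \times \mathfrak{L}(G)$, open codomain $V \times E$ and index $k - 1$, yields that $dg \circ \tilde{f}$ is $C^{k - 1, 0}$, closing the induction.

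The main obstacle is organizational rather than analytic: one must recognise that $d^{(1, 0)}(g \circ f)$ is again of the exact form to which the lemma applies, namely a composition of a $C^{k - 1}$-map on an open subset of a locally convex space with a $C^{k - 1, 0}$-map from a topological-group neighbourhood times a topological space. The combined use of Remark \ref{rem:f-Ck0-C0l-top-gr-top-space}, Remark \ref{rem:f-Ck-or-Ckl-then-relations-to-derivatives-top-gr} and the componentwise criterion \ref{lem:Ck-and-Ckl-maps-into-prod-top-gr} is precisely what makes this repackaging possible, and once it is achieved no further analytic input beyond the base case is required.
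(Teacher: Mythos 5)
Your proposal is correct and follows essentially the same route as the paper: the base case via the factorization $\frac{1}{t}(g(f(x\cdot\gamma(t),p))-g(f(x,p)))=g^{[1]}(f(x,p),f^{[1,0]}(x,\gamma,t,p),t)$, and the inductive step by writing $d^{(1,0)}(g\circ f)=dg\circ(h,d^{(1,0)}f)$ with $h(x,p,\gamma)=f(x,p)$ and reapplying the lemma with parameter space $P\times\mathfrak{L}(G)$. The only cosmetic difference is that you close the base case by exhibiting $(g\circ f)^{[1,0]}$ and citing the sufficiency direction of Lemma \ref{lem:f-C10-iff-f[1-0]-ex-and-cont-top-gr-top-space}, whereas the paper verifies the limit and the continuity of $d^{(1,0)}(g\circ f)$ directly; both are fine.
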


\begin{proof}
We may assume that $k$ is finite and prove the assertion by induction.
\vspace{1mm}

\textit{Induction start:} Assume that $f$ is $C^{1,0}$, $g$ is $C^1$ and let $x\in U$, $p\in P$ and $\gamma\in\mathfrak{L}(G)$. For $t\neq 0$ small enough
we have

\begin{align*}
	\frac{g(f(x\cdot(t),p))-g(f(x,p))}{t}
	&=\frac{g\left(f(x,p)+t\frac{f(x\cdot\gamma(t),p)-f(x,p)}{t}\right)-g(f(x,p))}{t}\\
	&=\frac{g(f(x,p)+t\cdot f^{[1,0]}(x,\gamma,t,p))-g(f(x,p))}{t}\\
	&=g^{[1]}(f(x,p),f^{[1,0]}(x,\gamma,t,p),t),
\end{align*}

where $g^{[1]}$, $f^{[1,0]}$ are the continuous maps from \cite[Lemma 1.2.10]{GlH-NeebK-Hprep}
and Lemma \ref{lem:f-C10-iff-f[1-0]-ex-and-cont-top-gr-top-space}.
As $t\to0$ we consequently have

\begin{align*}
	\frac{g(f(x\cdot(t),p))-g(f(x,p))}{t}
	&\to g^{[1]}(f(x,p),f^{[1,0]}(x,\gamma,0,p),0)\\
	&=dg(f(x,p),d^{(1,0)}f(x,p,\gamma)).
\end{align*}

Therefore, the derivative $d^{(1,0)}(g\circ f)(x,p,\gamma)$ exists and is given by the directional
derivative $dg(f(x,p),d^{(1,0)}f(x,p,\gamma))$.

Consider the continuous map

\begin{align*}
	h:U\times P\times\mathfrak{L}(G)\to E,\quad (x,p,\gamma)\mapsto f(x,p).
\end{align*}

Since $d^{(1,0)}(g\circ f)(x,p,\gamma) = (dg\circ (h,d^{(1,0)}f))(x,p,\gamma)$, the map

\begin{align*}
	d^{(1,0)}(g\circ f)=dg\circ (h,d^{(1,0)}f):U\times P\times\mathfrak{L}(G)\to F
\end{align*}

is continuous, whence $g\circ f$ is $C^{1,0}$.
\vspace{1mm}

\textit{Induction step:} Now, assume that $f$ is $C^{k,0}$ and $g$ is $C^k$ for some $k\geq2$. By Remark
\ref{rem:f-Ck-or-Ckl-then-relations-to-derivatives-top-gr}, the map $d^{(1,0)}f:U\times(P\times\mathfrak{L}(G))\to E$ is 
$C^{k-1,0}$, and it is easily seen that the map $h:U\times(P\times\mathfrak{L}(G))\to E$ defined in the induction start is $C^{k,0}$.
Hence, using Lemma \ref{lem:Ck-and-Ckl-maps-into-prod-top-gr} $(b)$, we see that
$(h,d^{(1,0)}):U\times(P\times\mathfrak{L}(G))\to E\times E$
is a $C^{k-1,0}$-map. Since $dg:V\times E\to F$ is $C^{k-1}$ (see \cite[Definition 1.3.1]{GlH-NeebK-Hprep}),
the map

\begin{align*}
	d^{(1,0)}(g\circ f)=dg\circ(h,d^{(1,0)}):U\times(P\times\mathfrak{L}(G))\to F
\end{align*}

is $C^{k-1,0}$, by the induction hypothesis, and from Remark \ref{rem:f-Ck-or-Ckl-then-relations-to-derivatives-top-gr}
follows
that $g\circ f$ is $C^{k,0}$.
\end{proof}

\begin{lemma}\label{lem:f-Ck-gr-g-Ck-lcs-then-g-comp-f-Ck-top-gr}
Let $G$ be a topological group, $E$, $F$ be locally convex spaces and $k\in\Natinfty$. Let $U\subseteq G$, $V\subseteq E$ be
open subsets. If $f:U\to E$ is a $C^k$-map with
$f(U)\subseteq V$ and also $g:V\to F$ is a $C^k$-map, then the map

\begin{align*}
	g\circ f:U\to F
\end{align*}

is $C^k$.
\end{lemma}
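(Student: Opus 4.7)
The plan is to deduce this lemma as a direct special case of Lemma \ref{lem:f-Ck0-gr-sp-g-Ck-lcs-then-g-comp-f-Ck0-top-gr-top-space} by introducing a trivial parameter space. Concretely, I would let $P := \{\ast\}$ be a one-point topological space (admissible as the second factor of a $C^{k,0}$-map by Remark \ref{rem:f-Ck0-C0l-top-gr-top-space}) and define $\tilde{f} \colon U \times P \to E$ by $\tilde{f}(x,\ast) := f(x)$.

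The first step is to observe that under the canonical homeomorphism $U \times P \cong U$, $(x,\ast) \mapsto x$, the map $f$ is $C^k$ if and only if $\tilde{f}$ is $C^{k,0}$, and in that case
\begin{align*}
  d^{(i,0)}\tilde{f}(x,\ast,\gamma_1,\dots,\gamma_i) = d^{(i)}f(x,\gamma_1,\dots,\gamma_i)
\end{align*}
for all $i \leq k$ and $\gamma_1,\dots,\gamma_i \in \mathfrak{L}(G)$. This is immediate from the definitions, as no one-parameter subgroups in the (trivial) group associated to $P$ are involved, and all relevant continuity conditions translate one-to-one through the homeomorphism.

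The second step is to apply Lemma \ref{lem:f-Ck0-gr-sp-g-Ck-lcs-then-g-comp-f-Ck0-top-gr-top-space} to $\tilde{f}$ and $g$, which yields that $g \circ \tilde{f} \colon U \times P \to F$ is $C^{k,0}$. Transporting this back through the homeomorphism $U \cong U \times P$ then gives that $g \circ f$ is $C^k$, as required.

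I do not expect any serious obstacle: the full substance of the chain rule, including both the $k=1$ base case phrased via $f^{[1]}$ and $g^{[1]}$ and the inductive step built on Remark \ref{rem:f-Ck-or-Ckl-then-relations-to-derivatives-top-gr}, has already been carried out in the preceding lemma. What remains here is only the routine identification of a $C^k$-map on $U$ with a $C^{k,0}$-map on $U$ times a point, which is a direct consequence of the definitions.
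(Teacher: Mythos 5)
Your proposal is correct, and there is no circularity: Lemma \ref{lem:f-Ck0-gr-sp-g-Ck-lcs-then-g-comp-f-Ck0-top-gr-top-space} is proved independently of the statement at hand, Remark \ref{rem:f-Ck0-C0l-top-gr-top-space} licenses taking the second factor to be an arbitrary Hausdorff space when the corresponding degree is $0$, and the identification $C^k(U,E)\cong C^{k,0}(U\times\{\ast\},E)$ with $d^{(i,0)}\tilde f(x,\ast,\gamma_1,\dots,\gamma_i)=d^{(i)}f(x,\gamma_1,\dots,\gamma_i)$ really is immediate from Definitions \ref{def:Ck-map-top-gr} and \ref{def:Ckl-map-top-gr}, since the difference quotients and the domains of the derivatives correspond under the canonical homeomorphisms. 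Your route is genuinely different from the paper's: rather than specializing the parametrized chain rule to a one-point parameter space, the paper reruns the induction on $k$ from scratch, establishing in the base case the formula $d(g\circ f)(x,\gamma)=g^{[1]}(f(x),f^{[1]}(x,\gamma,0),0)=dg(f(x),df(x,\gamma))$ via Lemma \ref{lem:f-C1-iff-f[1]-ex-and-cont-top-gr} and the locally convex analogue for $g$, and in the inductive step writing $d(g\circ f)=dg\circ(h,df)$ with $h(x,\gamma)=f(x)$ and invoking Lemma \ref{lem:f-Ck0-gr-sp-g-Ck-lcs-then-g-comp-f-Ck0-top-gr-top-space} together with Remark \ref{rem:f-Ck-or-Ckl-then-relations-to-derivatives-top-gr} and Lemma \ref{lem:Ck-and-Ckl-maps-into-prod-top-gr}. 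What the paper's longer argument buys is the explicit derivative formula $d(g\circ f)=dg\circ(h,df)$, which is useful in its own right; what your reduction buys is brevity and the avoidance of a near-verbatim duplication of the preceding proof. If you adopt your version, it would be worth stating the one-line verification of the correspondence of derivatives explicitly, but no further idea is missing.
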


\begin{proof}
We may assume that $k$ is finite and prove the assertion by induction.
\vspace{1mm}

\textit{Induction start:} Assume that $f$ and $g$ are $C^1$-maps. Analogously to the preceding lemma,
for $x\in U$, $\gamma\in\mathfrak{L}(G)$ and $t\neq0$ small enough we have

\begin{align*}
	\frac{1}{t}(g(f(x\cdot\gamma(t)))-g(f(x)))
	=g^{[1]}(f(x), f^{[1]}(x,\gamma,t),t),
\end{align*}

with continuous maps $f^{[1]}$ as in Lemma \ref{lem:f-C1-iff-f[1]-ex-and-cont-top-gr} and $g^{[1]}$ as in 
\cite[Lemma 1.2.10]{GlH-NeebK-Hprep}.
Thus, the derivative $d(g\circ f)(x,\gamma)$ exists
and we have

\begin{align*}
	d(g\circ f)(x,\gamma)=g^{[1]}(f(x), f^{[1]}(x,\gamma,0),0)=dg(f(x), df(x,\gamma)).
\end{align*}

Using the continuous function

\begin{align*}
	h:U\times\mathfrak{L}(G)\to E,\quad (x,\gamma)\mapsto f(x),
\end{align*}

we see that

\begin{align*}
	d(g\circ f)=dg\circ(h,df):U\times\mathfrak{L}(G)\to F
\end{align*}

is continuous, hence $g\circ f$ is a $C^1$-map.
\vspace{1mm}

\textit{Induction step:} Now, let $f$ and $g$ be $C^k$-maps for some $k\geq 2$. Then the map
$df:U\times\mathfrak{L}(G)\to E$ is $C^{k-1,0}$, by Remark \ref{rem:f-Ck-or-Ckl-then-relations-to-derivatives-top-gr},
and the map $h:U\times\mathfrak{L}(G)\to E$ is obviously $C^{k,0}$. We use Lemma 
\ref{lem:Ck-and-Ckl-maps-into-prod-top-gr} $(b)$
and see that $(h,df):U\times\mathfrak{L}(G)\to E\times E$
is a $C^{k-1,0}$-map. By \cite[Definition 1.3.1]{GlH-NeebK-Hprep},
the map $dg:V\times E\to F$ is $C^{k-1}$, hence
by Lemma \ref{lem:f-Ck0-gr-sp-g-Ck-lcs-then-g-comp-f-Ck0-top-gr-top-space}, the composition

\begin{align*}
	d(g\circ f)=dg\circ(h,df):U\times\mathfrak{L}(G)\to F
\end{align*}

is $C^{k-1,0}$, whence $g\circ f$ is $C^k$, by Remark \ref{rem:f-Ck-or-Ckl-then-relations-to-derivatives-top-gr}.
\end{proof}

\hspace{4mm}
Finally, the following example illustrates that the statement of Schwarz' Theorem does not hold for maps on
non-abelian topological groups.

\begin{example}\label{exam:schwarz-does-not-work-top-gr}
Consider the following subgroup $G$ of $GL_3(\Real)$:

\begin{align*}
	G:=\left\{x=\begin{pmatrix}
	1&x_1&x_2\\
	0&1&x_3\\
	0&0&1
	\end{pmatrix}
	: x_1,x_2,x_3\in\Real
	\right\}
\end{align*}

(known as the Heisenberg group) and $\gamma,\eta\in\mathfrak{L}(G)$ defined as

\begin{align*}
	\gamma(t):=\begin{pmatrix}1&t&0\\0&1&0\\0&0&1\end{pmatrix},\quad
	\eta(t):=\begin{pmatrix}1&0&0\\0&1&t\\0&0&1\end{pmatrix}\quad (\forall t\in\Real).
\end{align*}

Then $G\cong\Real^3$ via

\begin{align*}
	\phi:G\to\Real^3,\quad x:=\begin{pmatrix}1&x_1&x_2\\0&1&x_3\\0&0&1\end{pmatrix}\mapsto(x_1,x_2,x_3).
\end{align*}

Let $g:\Real^3\to\Real$ be a partially $C^2$-map in the usual sense and define

\begin{align*}
	f:=g\circ\phi:G\to\Real.
\end{align*}

Then for each $x\in G$, the derivatives $D_\gamma f(x)$, $D_\eta f(x)$, $(D_\eta D_\gamma f)(x)$ and
$(D_\gamma D_\eta f)(x)$ can be expressed using the partial derivatives of $g$.

First, we have

\begin{align*}
	D_\gamma f(x)
	&=\lim_{t\to0} \frac{1}{t}(f(x\cdot\gamma(t))-f(x))
	=\lim_{t\to0} \frac{1}{t}(g(\phi(x\cdot\gamma(t)))-g(\phi(x)))\\
	&=\lim_{t\to0} \frac{1}{t}(g(x_1+t,x_2,x_3)-g(x_1,x_2,x_3))\\
	&=\lim_{t\to0} \frac{1}{t}(g((x_1,x_2,x_3)+t(1,0,0))-g(x_1,x_2,x_3))
	=\partderxInd{1}g(x_1,x_2,x_3).
\end{align*}

Further,

\begin{align*}
	D_\eta f(x)
	&=\lim_{t\to0} \frac{1}{t}(f(x\cdot\eta(t))-f(x))
	=\lim_{t\to0} \frac{1}{t}(g(\phi(x\cdot\eta(t)))-g(\phi(x)))\\
	&=\lim_{t\to0} \frac{1}{t}(g(x_1,x_2+tx_1,x_3+t)-g(x_1,x_2,x_3))\\
	&=x_1\cdot\partderxInd{2}g(x_1,x_2,x_3)+\partderxInd{3}g(x_1,x_2,x_3).
\end{align*}

Now,

\begin{align*}
	(D_\eta D_\gamma f)(x)
	&=\lim_{t\to0} \frac{1}{t}(D_\gamma f(x\cdot\eta(t))-D_\gamma f(x))\\
	&=\lim_{t\to0} \frac{1}{t}\left(\partderxInd{1}g(x_1,x_2+tx_1,x_3+t)-\partderxInd{1}g(x_1,x_2,x_3)\right)\\
	&=x_1\cdot\frac{\partial^2}{\partial x_1 \partial x_2}g(x_1,x_2,x_3)+\frac{\partial^2}{\partial x_1 \partial x_3}g(x_1,x_2,x_3).
\end{align*}

And, finally

\begin{align*}
	&(D_\gamma D_\eta f)(x)
	=\lim_{t\to0} \frac{1}{t}(D_\eta f(x\cdot\gamma(t))-D_\eta f(x))\\
	&=\lim_{t\to0} \frac{1}{t}\left((x_1+t)\cdot\partderxInd{2}g(x_1+t,x_2,x_3)+\partderxInd{3}g(x_1+t,x_2,x_3)\right.\\
	&\left.\mbox{ }-x_1\cdot\partderxInd{2}g(x_1,x_2,x_3)-\partderxInd{3}g(x_1,x_2,x_3)\right)\\
	&=\lim_{t\to0} \frac{x_1}{t}\left(\partderxInd{2}g(x_1+t,x_2,x_3)-\partderxInd{2}g(x_1,x_2,x_3)\right)\\
	&\mbox{ }+\lim_{t\to0} \frac{1}{t}\left(\partderxInd{3}g(x_1+t,x_2,x_3)-\partderxInd{3}g(x_1,x_2,x_3)\right)
	+\lim_{t\to0} \partderxInd{2}g(x_1+t,x_2,x_3)\\
	&=x_1\cdot\frac{\partial^2}{\partial x_1 \partial x_2}g(x_1,x_2,x_3)+\frac{\partial^2}{\partial x_1 \partial x_3}g(x_1,x_2,x_3)
	+\partderxInd{2}g(x_1,x_2,x_3)\\
	&=(D_\eta D_\gamma f)(x)+\partderxInd{2}g(x_1,x_2,x_3).
\end{align*}

Thus we see that if $\partderxInd{2}g(x_1,x_2,x_3)\neq0$, then $(D_\gamma D_\eta f)(x)\neq(D_\eta D_\gamma f)(x)$.
\end{example}

\end{document}